\def\Im{\mathop{\rm Im}\nolimits}
\def\Re{\mathop{\rm Re}\nolimits}
\def\Im{\mathop{\rm Im}\nolimits}
\def\Re{\mathop{\rm Re}\nolimits}
\def\R{\mathbb R}
\def\N{\mathbb N}
\def\pxi{\langle \xi \rangle}
\def\px{\langle x \rangle}
\def\ds{\displaystyle}
\newcommand\dslash{d\llap {\raisebox{.9ex}{$\scriptstyle-\!$}}}
\newcommand{\beqsn}{\arraycolsep1.5pt\begin{eqnarray*}}
\newcommand{\eeqsn}{\end{eqnarray*}\arraycolsep5pt}
\newcommand{\beqs}{\arraycolsep1.5pt\begin{eqnarray}}
\newcommand{\eeqs}{\end{eqnarray}\arraycolsep5pt}
\newtheorem{theorem}{Theorem}
\newtheorem{lemma}{Lemma}
\newtheorem{proposition}{Proposition}
\newtheorem{definition}{Definition}
\newtheorem{remark}{Remark}
\renewcommand{\section}%
   {\setcounter{equation}{0}\@startsection {section}{1}{\z@}{-3.5ex plus -1ex
  minus -.2ex}{2.3ex plus .2ex}{\Large\bf}}
\title[Gevrey well-posedness for $3$-evolution equations ]{Gevrey well-posedness for $3$-evolution equations with variable coefficients}
\author[A. Arias Junior]{Alexandre Arias Junior}
\address{ Department of Mathematics, Federal University of Paran\'a, CEP 81531-980, Curitiba, Brazil}
\email{arias@ufpr.br}
\author[A. Ascanelli]{Alessia Ascanelli}
\address{Dipartimento di Matematica ed Informatica\\Universit\`a di Ferrara\\
Via Machiavelli 30\\
44121 Ferrara\\
Italy}
\email{alessia.ascanelli@unife.it}
\author[M. Cappiello]{Marco Cappiello}
\address{Dipartimento di Matematica ``G. Peano'' \\Universit\`a di Torino\\
Via Carlo Alberto 10\\
10123 Torino\\
Italy}
\email{marco.cappiello@unito.it}
\thanks{The first author was financed in part by the Coordena\c{c}\~ao de Aperfei\c{c}oamento de Pessoal de N\'ivel Superior - Brasil (CAPES) - Finance Code 001. The second author has been supported by the italian national research funds FFABR2017 and GNAMPA2020. The third author has been supported by the fund GNAMPA2020.}
\begin{document}

%\def\thefootnote{}
%\footnote{} 

\begin{abstract}
We study the Cauchy problem for a class of third order linear anisotropic evolution equations with complex-valued lower order terms depending both on time and space variables. Under suitable decay assumptions for $|x| \to \infty$ on these coefficients, we prove a well-posedness result in Gevrey-type spaces.
\end{abstract}

\maketitle

%\markboth{\sc Schr\"odinger equation in Gelfand-Shilov classes}{\sc A.~Ascanelli, M. Cappiello}
\noindent  \textit{2010 Mathematics Subject Classification}: 35G10, 35S05, 35B65, 46F05 \\

\noindent
\textit{Keywords and phrases}: $p$-evolution equations, Gevrey classes, well-posedness, infinite order pseudodifferential operators

\section{Introduction and main result}\label{section_introduction}
In this paper we deal with well-posedness of the Cauchy problem in Gevrey-type spaces for a class of linear anisotropic evolution operators of the form
\begin{equation}\label{diffP}
P(t, x, D_t, D_x) = D_t  + a_3(t, D_x) + a_2(t,x,D_x)+a_1(t,x,D_x)+ a_0(t,x,D_x),
\end{equation}
with  $(t,x) \in [0,T] \times \R$,  where $a_j(t,x,D_x)$ are pseudodifferential operators of order $j, j=0,1,2,3$ with complex-valued symbols and the symbol $a_3$ is independent of $x$ and real-valued. The latter condition guarantees that the operator $P$ satisfies the assumptions of the Lax-Mizohata theorem (see \cite[Theorem 3 page 31]{mizohata2014}), since the principal symbol of $P$  has the real characteristic $\tau=-a_3(t,\xi)$.
Operators of the form \eqref{diffP} can be referred to as $3$-evolution operators since they are included in the general class of $p$-evolution operators
$$P(t, x, D_t, D_x) = D_t  + a_p(t,D)+\sum_{j=0}^{p-1}a_j(t,x, D_x)$$
 introduced by Mizohata, cf. \cite{Mizo}, where $a_j$ are operators of order $j, j=0, \ldots, p,$  $p$ being a positive integer, and $a_p(t,\xi)$ is real-valued. Sufficient conditions for the well-posedness of the Cauchy problem 
in $H^\infty = \cap_{m \in \R}H^m$ for $p$-evolution operators have been proved in \cite{ascanelli_chiara_zanghirati_2012, cicognani_colombini_cauchy_problem_for_p_evolution_equations} for arbitrary $p$, whereas in the realm of Gevrey classes the results are limited to the case $p=2$, corresponding to Schr\"odinger operators, cf. \cite{ACR,CRJEECT, KB}.

In this paper we consider the case $p =3$ in the Gevrey setting. Third order linear evolution equations have a particular interest in mathematics since they can be regarded as linearizations of relevant physical semilinear models like KdV and KdV-Burgers equation and their generalizations, see for instance \cite{JM, Kato, KdV, KKN,TMI} . There are some results  concerning KdV-type equations  with coefficients not depending on $(t,x)$ in the Gevrey setting, see \cite{GHGP, Goubet, HHP}.  Our aim is to propose a general approach for the study of the Gevrey well-posedness for general linear and semilinear $3$-evolution equations with variable coefficients. Moreover, we want to consider the case when the coefficients of the lower order terms are complex-valued. In this situation it is well known that suitable assumptions for $|x| \to \infty$ on the coefficients of the lower order terms are needed to obtain well-posedness results, cf. \cite{ascanelli_chiara_zanghirati_necessary_condition_for_H_infty_well_posedness_of_p_evo_equations, ichinose_remarks_cauchy_problem_schrodinger_necessary_condition}. In this paper we establish the linear theory for the Cauchy problem associated to an operator of the form \eqref{diffP} whereas the semilinear case is treated in \cite{AACJMPA} where we adapt to the Gevrey framework the method proposed in 
\cite{DA} for hyperbolic equations and in \cite{ABtame} for $p$-evolution equations in the $H^\infty$ setting. There, well-posedness for the semilinear Cauchy problem is studied first by considering a linearized problem and then by deriving well-posedness for the semilinear equation using Nash-Moser inversion theorem. Hence, the results obtained in this paper, apart their interest per se for the linear theory, are also a preliminary step for the study of the semilinear case.

Let us consider for $(t,x) \in [0,T] \times \R$ the Cauchy problem in the unknown $u=u(t,x)$:
\begin{equation}\label{Cauchy_problem_in_introduction}
\begin{cases}
P(t, x, D_t, D_x) u(t,x)= f(t, x),	\\
u(0,x) = g(x), 
\end{cases}
\end{equation}
with $P$ defined by \eqref{diffP}. 

Concerning the functional setting, fixed $\theta \geq 1, m, \rho \in \R,$  we set
$$
H^{m}_{\rho; \theta} (\R) = \{ u \in \mathscr{S}'(\R) :  \langle D \rangle^{m} 
e^{\rho \langle D \rangle^{\frac{1}{\theta}}} u \in L^{2}(\R) \},
$$
where $\langle D \rangle^m$ and $e^{\rho \langle D \rangle^{\frac{1}{\theta}}}$ are the Fourier multipliers with symbols $\langle \xi \rangle^m$ and $e^{\rho \langle \xi \rangle^{\frac{1}{\theta}}}$ respectively.
These spaces are Hilbert spaces with the following inner product
$$\langle u, v \rangle_{H^{m}_{\rho;\theta}} = \, \langle\langle D \rangle^{m} e^{\rho\langle D \rangle^{\frac{1}{\theta}}}u, \langle D \rangle^{m} e^{\rho\langle D \rangle^{\frac{1}{\theta}}}v \rangle_{L^{2}}, \quad u, v \in H^{m}_{\rho;\theta}(\R).
$$
We want to investigate the well-posedness of the problem \eqref{Cauchy_problem_in_introduction} in the space
$$ \mathcal{H}^\infty_{\theta} (\R): = \bigcup_{\rho >0}H^m_{\rho; \theta}(\R).$$ 
This space is related to Gevrey classes in the following sense: it is easy to verify the inclusions
%$$ \gamma_0^\theta(\R) \subset H^\infty_{\theta} (\R) \subset \gamma^\theta(\R),$$
$$ G_0^\theta(\R) \subset  \mathcal{H}^\infty_{\theta} (\R) \subset G^\theta(\R),$$
where $G^\theta(\R)$ 
%(respectively $\gamma^\theta(\R)$) 
denotes the space of all smooth functions $f$ on $\R$ such that 
\begin{equation}
\label{gevestimate}
\sup_{\alpha \in \N} \sup_{x \in \R} h^{-|\alpha|} \alpha!^{-\theta} |\partial^\alpha f(x)| < +\infty
\end{equation} 
for some $h >0$, and $G_0^\theta(\R)$ is the space of all compactly supported functions contained in $G^\theta(\R)$.\\
The terms $a_j(t,x,D_x)$ are assumed to be pseudodifferential operators with Gevrey regular symbols; of course differential operators $a_j(t,x)D_x^j$, with $a_j(t,x)$ continuous on $t$ and Gevrey regular on $x$, are a particular case. Let us introduce a suitable symbol class.

\begin{definition} \label{standardsymbols}
For fixed $m \in \R, \mu \geq 1$, $\nu \geq 1$ and $A>0$, we shall denote by $S^m_{\mu,\nu}(\R^{2n};A)$ (respectively $\tilde{S}^m_{\mu,\nu}(\R^{2n};A)$) the Banach space of all functions $a \in C^\infty(\R^{2n})$ satisfying the following estimate
$$
\|a\|_A:= \sup_{\alpha, \beta \in \N_{0}^n} \sup_{(x,\xi) \in \R^{2}} A^{-|\alpha|-|\beta|} \alpha!^{-\mu} \beta!^{-\nu} \pxi^{-m+|\alpha|}|\partial_\xi^\alpha \partial_x^\beta a(x,\xi) | <+\infty,
$$ 
(respectively
$$
|a |_A:= \sup_{\alpha, \beta \in \N_{0}^n} \sup_{(x,\xi) \in \R^{2n}} A^{-|\alpha|-|\beta|} \alpha!^{-\mu} \beta!^{-\nu} \pxi^{-m}|\partial_\xi^\alpha \partial_x^\beta a(x,\xi) | <+\infty).
$$ 

We set $S^{m}_{\mu,\nu}(\R^{2n}) = \displaystyle\bigcup_{A > 0}S^m_{\mu,\nu}(\R^{2n};A)$ and $\tilde{S}^{m}_{\mu,\nu}(\R^{2n}) = \displaystyle\bigcup_{A > 0}\tilde{S}^m_{\mu,\nu}(\R^{2n};A)$ endowed with the inductive limit topology. In the case $\mu=\nu$ we simply write $S^m_{\mu}(\R^{2n})$ and $\tilde{S}^m_{\mu}(\R^{2n})$  instead of $S^m_{\mu,\mu}(\R^{2n})$ and $\tilde{S}^m_{\mu,\mu}(\R^{2n})$.
\end{definition}

The main result of the paper reads as follows.

\begin{theorem}\label{mainthm}
	Let $s_0 > 1$ and $\sigma \in (\frac{1}{2}, 1)$ such that $s_0 < \frac{1}{2(1-\sigma)}$. Let moreover $P(t,x,D_t, D_x)$ be defined by \eqref{diffP}. Assume that 
\begin{itemize}
	\item [(i)] $a_3(t,\xi) \in C([0,T]; S^{3}_{1}(\R^2))$, $a_3(t,\xi)$ is real-valued, 
%	$$
%	|\partial^{\alpha}_{\xi} a_3(t,\xi)| \leq C A^{\alpha} \alpha! \langle \xi \rangle^{3-\alpha}, \quad x \in \R, \,\, t \in [0,T],
%	$$
	and there exists $R_{a_3} > 0$ such that
	$$
	|\partial_{\xi} a_3(t,\xi)| \geq C_{a_3} \xi^{2}, \quad t \in [0,T],\,\, |\xi| > R_{a_3};
	$$
	
	\item [(ii)] $a_j \in C([0,T]; S^{j}_{1,s_0}(\R^{2}))$ for $j = 0,1,2$; 
	
	\item [(iii)] there exists $C_{a_2} > 0$ such that  
	$$
	|\partial^{\alpha}_{\xi} \partial^{\beta}_{x} a_2(t,x,\xi)| \leq C^{\alpha+\beta+1}_{a_2} \alpha! \beta!^{s_0} \langle \xi \rangle^{2-\alpha} \langle x \rangle^{-\sigma}, \quad t \in [0,T], \, x, \xi \in \R, \, \alpha, \beta \in \N_{0};
	$$
	
	\item [(iv)] there exists $C_{a_1}$ such that 
	$$
	|\Im\, a_1(t,x,\xi)| \leq C_{a_1} \langle \xi \rangle \langle x \rangle^{-\frac{\sigma}{2}}, \quad t \in [0,T], \, x,\xi \in \R.
	$$
\end{itemize}
Then given $\theta\in\left[s_0,\frac1{2(1-\sigma)}\right), \rho >0$, $m\in\R$, and given $f \in C([0,T], H^m_{\rho; \theta}(\R))$ and $g \in  H^m_{\rho; \theta}(\R)$, there exists a unique solution $u \in C^1([0,T], H^{m}_{\rho'; \theta}(\R))$ of \eqref{Cauchy_problem_in_introduction} for some $0< \rho' < \rho$, and $u$ satisfies the energy estimate
\begin{equation}
\label{energyestimate}
\| u(t, \cdot )\|_{H^{m}_{\rho'; \theta}}^2 \leq C \left( \| g \|^2_{H^{m}_{\rho; \theta}} + \int_0^t \| f (\tau, \cdot) \|^2_{H^{m}_{\rho; \theta}}\, d\tau \right).
\end{equation}
Moreover,
for $\theta \in [s_0, \frac{1}{2(1-\sigma)})$ the Cauchy problem \eqref{Cauchy_problem_in_introduction} is well-posed in $\mathcal{H}^\infty_{\theta}(\R) $.	
\end{theorem}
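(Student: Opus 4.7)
The plan is to conjugate $P$ by a suitable pseudodifferential operator of infinite order $e^{\Lambda(t,x,D_x)}$, reducing the problem to an equation for $v = e^{-\Lambda} u$ whose $L^2$ energy can be controlled by the sharp G\aa rding inequality. Specifically, I would look for $\Lambda = \Lambda_2 + \Lambda_1$ where $\lambda_2$ (of order $0$ in $\xi$) is designed to absorb $\Im a_2$ through the Poisson bracket $\{a_3,\lambda_2\} = \partial_\xi a_3 \, \partial_x \lambda_2$, and $\lambda_1$ (of order $-1$ in $\xi$) analogously absorbs the imaginary part of the first-order symbol produced after the first conjugation. Hypotheses (iii)--(iv) then lead to
\[
\lambda_2(t,x,\xi)\sim \int_0^x \frac{\Im a_2(t,y,\xi)}{\partial_\xi a_3(t,\xi)}\,dy, \qquad \lambda_1(t,x,\xi)\sim \int_0^x \frac{\Im a_1^{\sharp}(t,y,\xi)}{\partial_\xi a_3(t,\xi)}\,dy,
\]
growing at most like $\langle x\rangle^{1-\sigma}$ and $\langle\xi\rangle^{-1}\langle x\rangle^{1-\sigma/2}$ respectively; crucially, both growths in $x$ are sublinear, which is what makes the subsequent symbolic calculus possible.

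The operators $e^{\pm\Lambda_j}$ belong to an infinite-order pseudodifferential class of $\SG$-type with exponential weights $e^{c\langle x\rangle^{1-\sigma}}$ on the spatial side and $e^{\rho\langle\xi\rangle^{1/\theta}}$ on the Fourier side---this is the class $\gam$ encoded in the preamble. I would therefore first develop the basic calculus (composition, asymptotic expansions, continuity on $H^{m}_{\rho;\theta}(\R)$) for this class uniformly in $t\in[0,T]$, and then verify that the series
\[
e^{-\Lambda} a_j\, e^{\Lambda} = \sum_{k\ge 0} \frac{1}{k!}\,\mathrm{ad}_{\Lambda}^{k}(a_j)
\]
converges in the calculus precisely when $\theta <\tfrac{1}{2(1-\sigma)}$; this is the range in which the iterated $\langle x\rangle$-growth accumulated by multiple commutators is compatible with the Fourier weight $e^{\rho\langle\xi\rangle^{1/\theta}}$. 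The resulting conjugated operator is
\[
P_\Lambda = D_t + a_3 + \tilde a_2 + \tilde a_1 + \tilde a_0,
\]
with $\Im\tilde a_2\ge 0$ modulo an $L^2$-bounded error, $\tilde a_1$ of order $1$ whose imaginary part is bounded in $x$, and $\tilde a_0$ bounded.

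The energy estimate for $v$ then follows from the standard computation
\[
\tfrac{d}{dt}\|v(t)\|_{L^2}^{2}=2\Im\langle P_\Lambda v,v\rangle_{L^2}-2\Im\langle (a_3+\tilde a_2+\tilde a_1+\tilde a_0)v,v\rangle_{L^2},
\]
by combining the sharp G\aa rding inequality on $\tilde a_2$ with Calderon--Vaillancourt on the lower order terms, and a Gronwall argument. Transferring the $L^2$-bound on $v$ back through $u = e^{\Lambda}v$ and using the continuity $e^{\pm\Lambda}\colon H^{m}_{\rho;\theta}(\R)\to H^{m}_{\rho';\theta}(\R)$ with a strict loss $\rho-\rho'>0$ of Gevrey radius yields \eqref{energyestimate}. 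Existence and uniqueness of $u\in C^{1}([0,T];H^{m}_{\rho';\theta}(\R))$ then follow by a regularization-and-passage-to-the-limit argument standard in the field.

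The main obstacle is the infinite-order symbolic calculus in $\gam$: one has to control compositions of operators whose symbols grow exponentially in both $x$ and $\xi$ (in suitably twisted senses), show invertibility of $e^{\Lambda}$, and establish asymptotic convergence of $e^{-\Lambda} P e^{\Lambda}$ while tracking how each iterated bracket trades $\langle x\rangle$-growth against $\langle\xi\rangle$-decay. The precise interplay between the Gevrey indices $(\mu,\nu,s)=(1,s_0,\theta)$ coming from hypotheses (i)--(ii) and the decay exponent $\sigma$ from (iii)--(iv) is the technical heart of the argument, and is what fixes the sharp upper bound $\theta<\tfrac{1}{2(1-\sigma)}$.
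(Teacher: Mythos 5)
Your high-level plan — conjugate by an infinite-order operator built from integrated versions of $\Im a_2$ and $\Im a_1$, then apply positivity inequalities and Gronwall — is the same skeleton as the paper's, but there are two genuine gaps that would prevent your version from closing.

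First, your $\Lambda = \Lambda_2+\Lambda_1$ is missing the time-dependent piece $k(t)\langle\xi\rangle_h^{1/\theta}$ that the paper adds. The conjugation by $e^{\lambda_2+\lambda_1}$ fixes the signs of the order-$2$ and order-$1$ parts but unavoidably produces new remainders of positive order $2(1-\sigma)$ (coming from $\partial_\xi\lambda_2\cdot D_x a_1$, $\partial_\xi\lambda_1\cdot D_x a_2$ and further expansion terms); these have no definite sign and are not $L^2$-bounded, so neither sharp G\aa rding nor Calder\'on--Vaillancourt disposes of them. In the paper they are dominated by the term $-k'(t)\langle\xi\rangle_h^{1/\theta}$ (with $k'\le 0$) produced by conjugating $\partial_t$ with $e^{k(t)\langle D\rangle_h^{1/\theta}}$, using precisely the structural inequality $2(1-\sigma)<1/\theta$. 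This is also where the loss $\rho'<\rho$ really comes from (namely $\rho'\le k(T)<k(0)<\rho$), not from the $\lambda_j$'s, which have order only $2(1-\sigma)<1/\theta$ and hence cost an arbitrarily small loss. Your sentence ``$\tilde a_0$ bounded'' glosses over exactly the terms that make this extra layer necessary.

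Second, sharp G\aa rding on the order-$2$ part gives a remainder controlled only in $H^{1/2}$, not in $L^2$; for $\Re\tilde a_2\ge 0$ of order $2$ one must invoke the Fefferman--Phong inequality to get the $-C\|v\|_{L^2}^2$ lower bound needed for Gronwall. Sharp G\aa rding is the right tool for the order-$1$ and order-$1/\theta$ pieces only. A related (minor but real) issue: your na\"{\i}ve choices $\lambda_2\sim\int_0^x \Im a_2/\partial_\xi a_3$, $\lambda_1\sim\langle\xi\rangle^{-1}\int_0^x\langle y\rangle^{-\sigma/2}$ grow without bound in $x$; the paper multiplies the integrands by a cutoff $\psi(\langle y\rangle/\langle\xi\rangle_h^2)$ precisely so that $\lambda_2\lesssim\min\{\langle\xi\rangle_h^{2(1-\sigma)},\langle x\rangle^{1-\sigma}\}$ and $\lambda_1\lesssim\min\{\langle\xi\rangle_h^{1-\sigma},\langle\xi\rangle_h^{-1}\langle x\rangle^{1-\sigma/2}\}$, keeping the symbol of $e^{\tilde\Lambda}$ in a manageable $\textbf{SG}$-type class and making the ``cutoff errors'' appear as further order-$1/\theta$ contributions that, again, the $k(t)$-layer absorbs. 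Without these three ingredients — the $k(t)$-layer, Fefferman--Phong on the order-$2$ part, and the spatial cutoff in $\lambda_j$ — the energy estimate does not close.
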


\begin{remark}
We notice that the solution $u$ exhibits a loss of regularity with respect to the initial data in the sense that it belongs to $H^m_{\rho';\theta}$ for some $\rho'<\rho.$ Moreover, the decay rate $\sigma$ of the coefficients imposes restrictions on the values of $\theta$ for which the Cauchy problem \eqref{Cauchy_problem_in_introduction} is well-posed. Such phenomena are typical of this type of problems and they appear also in the papers \cite{CRJEECT, KB}. In the recent paper \cite{AAC3evolGelfand-Shilov} we proved the existence of a solution with no loss of regularity with respect to the initial data and no upper bound for $\theta$ provided that the Cauchy data also satisfy a suitable exponential decay condition.
\end{remark}

\begin{remark} 
Let us make some comments on the decay assumptions (iii) and (iv) in Theorem \ref{mainthm}. For $p=2$ we know from \cite{KB} that the decay condition $\Im a_{p-1}(t,x)\sim\langle x\rangle^{-\sigma}$, $\sigma\in (0,1)$, leads to Gevrey well-posedness for $s_0\leq \theta<1/(1-\sigma)$. Here we prove that for $p=3$ the decay condition $a_{p-1}(t,x)\sim\langle x\rangle^{-\sigma}$, $\sigma\in (1/2,1)$, together with a weaker decay assumption on the (lower order) term $\Im a_{p-2}(t,x)\sim\langle x\rangle^{-\sigma/2}$ is sufficient to obtain Gevrey well-posedness for $s_0\leq \theta<1/(2(1-\sigma))$. Comparing these results, we point out that  in the present paper we need to assume that also $\Re a_2 \sim \langle x\rangle^{-\sigma}$ in order to control the term appearing in \eqref{equation_remainde_conj_level_2_by_k_D}. This assumption is crucial in the argument to obtain our result. In the recent paper \cite{AACJDE} we proved that for $\sigma \in (0, 1/2]$ the Cauchy problem \eqref{Cauchy_problem_in_introduction} is not well-posed in $\mathcal{H}^\infty_\theta(\R)$ for any $\theta$.
\end{remark}

To prove Theorem \ref{mainthm} we need to perform a suitable change of variable. In fact, if we set
$$
iP = \partial_{t} + ia_3(t,D) + \underbrace{\sum_{j = 0}^{2} ia_j(t,x,D_x)}_{=: A},
$$
since $a_3(t,\xi)$ is real-valued, we have 
\begin{align*}
	\frac{d}{dt} \| u(t) \|^{2}_{L^{2}} &= 2Re\, \langle \partial_{t} u(t), u(t)\rangle_{L^{2}} \\ 
	&= 2Re\, \langle iPu(t), u(t)\rangle_{L^2} - 2Re\, \langle ia_3(t,D) u(t), u(t) \rangle_{L^2} - 2Re\,\langle Au(t), u(t) \rangle_{L^2} \\
	&\leq \| Pu(t) \|^{2}_{L^{2}} + \| u(t) \|^{2}_{L^{2}} - \,\langle (A+A^{*})u(t), u(t)\rangle_{L^2}.
\end{align*}
However, $A+A^{*}$ is an operator of order $2$, so we cannot derive an energy inequality in $L^2$ from the estimate above. The idea is then to conjugate the operator $iP$ by a suitable invertible pseudodifferential operator $e^{\Lambda}(t,x,D_x)$ in order to obtain 
$$
(iP)_{\Lambda} := e^{\Lambda} \circ (iP) \circ \{e^{\Lambda}\}^{-1} = \partial_{t} + ia_3(t,D_x) + \{a_{2,\Lambda} + a_{1,\Lambda} + a_{\frac{1}{\theta}, \Lambda} + r_{0, \Lambda} \}(t,x,D_x),
$$
with $a_{j, \Lambda}(t,x,\xi)$ of order $j$ but with $Re\, a_{j, \Lambda} \geq 0$, for $j = \frac{1}{\theta}, 1, 2 $, and $r_{0,\Lambda}(t,x,D)$ has symbol $r_{0,\Lambda}(t,x,\xi)$ of order zero. In this way, applying Fefferman-Phong inequality to $a_{2,\Lambda}$ (see \cite{fefferman_Phong_inequality}) and sharp G{\aa}rding inequality to $a_{1,\Lambda}$ and $a_{\frac{1}{\theta}, \Lambda}$ (see Theorem $1.7.15$ of \cite{nicola_rodino_global_pseudo_diffferential_calculus_on_euclidean_spaces}),  we obtain the estimate from below  
$$
Re\, \langle (a_{2,\Lambda} + a_{1,\Lambda} + a_{\frac{1}{\theta}, \Lambda})(t,x,D_x) v(t), v(t) \rangle_{L^{2}} \geq -c \| v(t) \|^{2}_{L^{2}},
$$ 
and therefore for the solution $v$ of the Cauchy problem associated to the operator $P_\Lambda$ we get
$$
\frac{d}{dt} \| v(t) \|^{2}_{L^{2}} \leq 
C(\| (iP)_{\Lambda}v(t) \|^{2}_{L^{2}} + \| v(t) \|^{2}_{L^{2}}).
$$
Gronwall inequality then gives  the desired energy estimate for the conjugated operator $(iP)_{\Lambda}$. By standard arguments in the energy method we then obtain that the Cauchy problem associated with $P_{\Lambda}$ is well-posed in any Sobolev space $H^{m}(\R)$.

Finally we turn back to our original Cauchy problem \eqref{Cauchy_problem_in_introduction}. The problem \eqref{Cauchy_problem_in_introduction} is in fact equivalent to the auxiliary Cauchy problem
\begin{equation}\label{equation_conjugated_cauchy_problem}
\begin{cases}
P_{\Lambda}(t, x, D_t, D_x) v(t,x) = e^{\Lambda}(t,x,D_x) f(t,x), \quad (t,x) \in [0,T] \times \R, \\
v(0,x)= e^{\Lambda}(0,x, D_x)g(x), \quad x \in \R,
\end{cases}
\end{equation}
in the sense that if $u$ solves \eqref{Cauchy_problem_in_introduction} then $v=e^{\Lambda}(t,x,D_x)u$ solves (\ref{equation_conjugated_cauchy_problem}), and if $v$ solves (\ref{equation_conjugated_cauchy_problem}) then $u=\{e^{\Lambda}(t,x,D_x)\}^{-1}v$ solves \eqref{Cauchy_problem_in_introduction}. In this step the continuous mapping properties of $e^{\Lambda}(t,x,D_x)$ and $\{e^{\Lambda}(t,x,D_x)\}^{-1}$ play an important role.

The operator $e^\Lambda(t,x,D_x)$ will be the composition of two pseudodifferential operators of infinite order, namely
% with symbol $e^{\Lambda(t,x,\xi)}$, with the function $\Lambda(t,x,\xi)$ of the form
\begin{equation} \label{nuovadefLambda}
e^\Lambda(t, x, D_x) = e^{k(t) \langle D_x \rangle^{\frac{1}{\theta}}_{h}} \circ e^{ \tilde{\Lambda}}(x,D_x)
\end{equation}
where $\tilde{\Lambda} = \lambda_2 + \lambda_1 \in S^{2(1-\sigma)}_{\mu} (\R^2)$, $k \in C^1([0,T]; \R)$ is a positive non increasing function to be chosen later on and $\langle \xi \rangle_{h} := \sqrt{h^{2} + \xi^{2}}$ with $h > 0$ a large parameter. Now we briefly explain the main role of each part of the change of variables. The transformation with $\lambda_2$ will change the terms of order $2$ into the sum of a positive operator of order $2$ plus a remainder of order $1$; the transformation with $\lambda_1$ will not change the terms of order $2$, but it will turn the terms of order $1$ into the sum of a positive operator of the same order plus a remainder of order not exceeding $1/\theta$. Finally the transformation with $k$ will correct this remainder term. We also observe that since $2(1-\sigma) < 1/\theta$ the leading part is $k(t) \langle \xi \rangle^{\frac{1}{\theta}}_{h}$, hence the inverse of $e^{\Lambda}(t,x,D_x)$ possesses regularizing properties with respect to the spaces $H^m_{\rho; \theta}$, because $k(t)$ has positive sign.

The paper is organized as follows. In Section \ref{pseudos} we introduce a class of pseudodifferential operators of infinite order which includes the operator $e^{\Lambda}(t,x,D)$ mentioned above and its inverse and state a conjugation theorem. To introduce quickly the reader to the core of the paper, we report in this section only the main definitions and results and postpone long proofs and collateral results to the Appendix at the end of the paper.
In Section \ref{changeofvariables} we introduce the operator $e^{\tilde{\Lambda}}(x,D)$  and prove its invertibility. In Section  \ref{section_conjugation_of_iP} we treat the conjugation of $iP$ with $e^{\Lambda}(t,x,D)$ and its inverse and transform the Cauchy problem \eqref{Cauchy_problem_in_introduction} into \eqref{equation_conjugated_cauchy_problem}. Finally in Section \ref{Proofmainresult} we prove Theorem \ref{mainthm}.

\section{Pseudodifferential operators of infinite order}
\label{pseudos} 
In this section we recall some basic properties of pseudodifferential operators with symbols as in Definition \ref{standardsymbols}. Moreover, we introduce the pseudodifferential operators of infinite order which will be used to define the change of variable mentioned in the Introduction and state a conjugation theorem. 
Our approach follows the same ideas used in \cite{KN} but with some minor modifications due to the fact that we need a more explicit Taylor expansion of the symbol of the conjugated operator $(iP)_\Lambda$, cf. Remark \ref{remark6} below. Wanting to direct the reader to the main results of the article as soon as possible, we prefer to dedicate an Appendix at the end of the paper to discuss these technical facts. Notice that from now on we shall use the simpler notation $D$ instead of $D_x$ when denoting pseudodifferential operators since time derivatives are not involved in the expression of $e^\Lambda$.

In addition to the symbols defined in Definition 1, fixed $\theta>1$, $1\leq \mu\leq \theta$ and $A,c>0$, we will also consider the following Banach spaces: 
$$
p(x,\xi) \in S^{\infty}_{\mu;\theta}(\R^{2n};A,c) \iff \|p\|_{A,c} := \sup_{\overset{\alpha, \beta \in \N_{0}^{n}}{x,\xi \in \R^{n}}} |\partial_\xi^\alpha \partial_x^\beta p(x,\xi)| A^{-|\alpha+\beta|} \alpha!^{-\mu} \beta!^{-\mu} \langle \xi \rangle^{|\alpha|} e^{-c|\xi|^{\frac{1}{\theta}}} < +\infty;
$$
$$
p(x,\xi) \in \tilde{S}^{\infty}_{\mu;\theta}(\R^{2n};A,c) \iff |p|_{A,c} := \sup_{\overset{\alpha, \beta \in \N_{0}^{n}}{x,\xi \in \R^{n}}} |\partial_\xi^\alpha \partial_x^\beta p(x,\xi)| A^{-|\alpha+\beta|} \alpha!^{-\mu} \beta!^{-\mu} e^{-c|\xi|^{\frac{1}{\theta}}} < +\infty.
$$
We set  
$$S^\infty_{\mu;\theta}(\R^{2n}):= \bigcup_{c,A>0}S^{\infty}_{\mu;\theta}(\R^{2n};A), \qquad \tilde{S}^\infty_{\mu;\theta}(\R^{2n}):= \bigcup_{c,A>0}\tilde{S}^{\infty}_{\mu;\theta}(\R^{2n};A),$$
endowed with the inductive limit topology.

Unlike Definition \ref{standardsymbols}, the classes above have been defined for simplicity's sake assuming the same Gevrey regularity in $x$ and $\xi$ ($\mu=\nu$). This choice is justified by the fact that taking $\mu\neq\nu$ would not improve significantly the results here below.
Since in several parts of the paper we shall need more precise estimates for the $x$-derivatives of symbols, it is important to introduce also Gevrey regular $\textrm{\textbf{SG}}$ symbols, cf. \cite{CappielloRodino}. Namely, given $m_1,m_2 \in \R$, $\mu\geq 1$ and $A>0$, we say that $ p \in \textrm{\textbf{SG}}^{m_1,m_2}_{\mu}(\R^{2n}; A)$ if and only if $$ |||p |||_A :=
	\sup_{\stackrel{\alpha, \beta \in \N^{n}_{0}}{(x,\xi) \in \R^{2n}} }| \partial_\xi^\alpha \partial_x^\beta p(x,\xi) |A^{-|\alpha+\beta|} \alpha!^{-\mu} \beta!^{-\mu} \pxi^{-m_1+|\alpha|} \px^{-m_2+|\beta|}  <+\infty.
	$$ 
	We set $\textrm{\textbf{SG}}^{m_1,m_2}_{\mu}(\R^{2n}) = \displaystyle\bigcup_{A > 0}\textrm{\textbf{SG}}^{m_1,m_2}_{\mu}(\R^{2n};A)$ endowed with the inductive limit topology. %We write $\textrm{\textbf{SG}}^{m_1,m_2}_{\mu}(\R^{2n})$ instead of $\textrm{\textbf{SG}}^{m_1,m_2}_{\mu,\mu}(\R^{2n})$.

\begin{remark}
We have the inclusions $S^{m}_{\mu}(\R^{2n}) \subset S^{\infty}_{\mu;\theta} (\R^{2n})$ for every $m \in \R$ and $\theta > 1$ and $\textrm{\textbf{SG}}_{\mu}^{m_1,m_2}(\R^{2n}) \subset S^{m_1}_{\mu}(\R^{2n})$ if $m_2 \leq 0.$
\end{remark}

Denote by $\gamma^\theta(\R^n)$ the space of all functions $f \in C^\infty(\R^n)$ such that \begin{equation}
	\label{gevestimate}
	\sup_{\alpha \in \N^n} \sup_{x \in \R^n} h^{-|\alpha|} \alpha!^{-\theta} |\partial^\alpha f(x)| < +\infty
\end{equation} 
for every $h >0$, and by $\gamma_0^\theta(\R^n)$ the space of all compactly supported functions contained in $\gamma^\theta(\R^n)$.
For a given symbol $p \in \tilde{S}^{\infty}_{\mu;\theta}(\R^{2n})$ we denote by $p(x,D)$ or by $\textrm{op}(p)$ the pseudodifferential operator defined by 
\begin{equation} \label{pseudop}
p(x,D) u (x) = \int e^{i\xi x} p(x,\xi) \widehat{u}(\xi) \dslash\xi, \quad u \in \gamma_0^\theta(\R^{n}),
\end{equation}
where $ \dslash\xi = (2\pi)^{-n}d\xi.$
Arguing as in \cite[Theorem 3.2.3]{Rodino_linear_partial_differential_operators_in_gevrey_spaces} or \cite[Theorem 2.4]{Zanghirati} it is easy to verify that operators of the form \eqref{pseudop} with symbols from $\tilde{S}^\infty_{\mu;\theta}(\R^{2n})$, with $\mu <\theta$, map continuously $\gamma_0^\theta(\R^n)$ into $\gamma^\theta(\R^n)$. However, it is convenient to work in a functional setting which is invariant under the action of these operators.
This is represented by Gelfand-Shilov spaces of type $\mathscr{S}$, cf. \cite{GS2}.

\begin{definition}
Given $\theta \geq 1$ and $A > 0$ we say that a function $f \in C^\infty(\R^n)$ belongs to $\mathcal{S}_{\theta, A} (\R^{n})$ if there exists $C > 0$ such that 
$$
|x^{\beta} \partial^{\alpha}_{x}f(x)| \leq C A^{|\alpha|+|\beta|}\alpha!^{\theta} \beta!^{\theta}, 
$$
for every $\alpha, \beta \in \N^{n}_{0}$ and $x \in \R^{n}$.  We define 
$$
\mathcal{S}_{\theta}(\R^{n}) = \bigcup_{A>0} \mathcal{S}_{\theta, A} (\R^{n}), \quad 
\Sigma_{\theta} (\R^{n}) = \bigcap_{A > 0} \mathcal{S}_{\theta, A} (\R^{n}).
$$ 
\end{definition} 
The norm 
$$
\| f \|_{\theta, A} \,= \sup_{\overset{x \in \R^{n}}{\alpha, \beta \in \N^{n}_{0}}} |x^{\beta} \partial^{\alpha}_{x}f(x)|A^{-|\alpha|-|\beta|}\alpha!^{-\theta}\beta!^{-\theta}, \quad  f \in \mathcal{S}_{\theta, A} (\R^{n}),
$$
turns $\mathcal{S}_{\theta, A}(\R^{n})$ into a Banach space, which allows to equip $\mathcal{S}_{\theta}(\R^{n})$ (resp. $\Sigma_{\theta}(\R^{n})$) with the inductive (resp. projective) limit topology coming from the Banach spaces $\mathcal{S}_{\theta, A}(\R^{n})$.

\begin{remark}
We can also define, for $M, \varepsilon > 0$, the Banach space $\mathcal{S}_{\theta}(\R^{n};M,\varepsilon)$ of all functions $f \in C^\infty(\R^n)$ such that
$$
\|f\|_{M,\varepsilon}:= \sup_{\stackrel{x \in \R^{n}}{\alpha \in \N^{n}_{0}}}M^{-|\alpha|} \alpha!^{-\theta} e^{\varepsilon |x|^{\frac{1}{\theta}}}|\partial^{\alpha}_{x}f(x)| <\infty ,
$$
and we have (with equivalent topologies) 
$$
\mathcal{S}_{\theta}(\R^{n}) = \bigcup_{M,\varepsilon >0} \mathcal{S}_{\theta} (\R^{n};M,\varepsilon), \quad 
\Sigma_{\theta} (\R^{n}) = \bigcap_{M, \varepsilon > 0} \mathcal{S}_{\theta} (\R^{n};M,\varepsilon).
$$ 
\end{remark}

It is easy to see that the following inclusions are continuous (for every $\varepsilon > 0$)
$$ \gamma_0^\theta(\R^n) \subset
\Sigma_{\theta} (\R^{n}) \subset  \mathcal{S}_{\theta}(\R^{n}) \subset G^{\theta} (\R^{n}),
\quad
\mathcal{S}_{\theta}(\R^{n}) \subset \Sigma_{\theta+\varepsilon}(\R^{n}),
$$
for every $\theta \geq 1.$
We shall denote by $(\mathcal{S}_{\theta})' (\R^{n})$, $(\Sigma_{\theta})' (\R^{n})$
the respective dual spaces. Concerning the action of the Fourier transform we have the following isomorphisms
$$ \mathcal{F}: \mathcal{S}_{\theta}(\R^{n}) \to \mathcal{S}_{\theta}(\R^{n}), \quad
\mathcal{F}: \Sigma_{\theta} (\R^{n}) \to \Sigma_{\theta} (\R^{n}).
$$

\begin{proposition}
Let $p \in \tilde{S}^{m}_{\mu}(\R^{2n};A)$. Then for every $\theta \geq  \mu$ (resp. $\theta >  \mu$), the operator $p(x,D)$ maps continuously $\mathcal{S}_{\theta}(\R^{n})$ into $\mathcal{S}_{\theta}(\R^{n})$ (resp. $\Sigma_{\theta}(\R^{n})$ into $\Sigma_{\theta}(\R^{n})$), and it extends to a continuous map from $(\mathcal{S}_{\theta})'(\R^{n})$ (resp. $(\Sigma_\theta)'(\R^n)$) into itself. Moreover, there exists $\tilde{\delta} >0$ such that $p(x,D)$ maps continuously $H^{m+m'}_{\rho;\theta}(\R^n)$ into $H^{m'}_{\rho;\theta}(\R^n)$ for every $m' \in \R$ and for $|\rho| < \tilde{\delta} A^{-1/\theta}.$  
\end{proposition}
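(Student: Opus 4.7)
My plan splits into three steps matching the assertions. \emph{Step 1.} For continuity on $\cS_\theta$ and $\Sigma_\theta$, given $u \in \cS_\theta(\R^n)$, Fourier invariance gives $\widehat{u} \in \cS_\theta(\R^n)$, which provides exponential decay $|\widehat{u}(\xi)| \lesssim e^{-\varepsilon|\xi|^{1/\theta}}$ together with Gelfand-Shilov-type bounds for its derivatives. Starting from
\[
p(x,D)u(x) = \int e^{ix\xi} p(x,\xi) \widehat{u}(\xi) \dslash\xi,
\]
I would estimate $|x^\beta \partial^\alpha_x [p(x,D)u](x)|$ by converting the factor $x^\beta$ into $D_\xi^\beta$ through integration by parts and distributing derivatives across the three factors by Leibniz. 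The hypothesis $\mu \leq \theta$ ensures $\alpha!^\mu \beta!^\mu \leq \alpha!^\theta \beta!^\theta$; the polynomial growth in $\xi$ from the symbol estimates is absorbed by the exponential decay of $\widehat{u}$; and all combinatorial sums converge. Tracking seminorms then yields the desired continuity. The argument for $\Sigma_\theta$ with $\theta > \mu$ is entirely analogous, the strict inequality providing the extra margin to accommodate arbitrarily small Gelfand-Shilov constants.

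\emph{Step 2.} For the extension to the duals I would argue by transposition. The formal $L^2$ adjoint $p(x,D)^*$ has symbol representable as an oscillatory integral, and a standard Gevrey pseudodifferential calculation shows that $p^* \in \tilde{S}^m_\mu(\R^{2n})$. By Step 1 the operator $p^*(x,D)$ is continuous on $\cS_\theta$ (resp.\ $\Sigma_\theta$), so setting $\langle p(x,D)u, \varphi \rangle := \langle u, p^*(x,D) \varphi \rangle$ yields continuity on $(\cS_\theta)'(\R^n)$ (resp.\ $(\Sigma_\theta)'(\R^n)$).

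\emph{Step 3.} For the Sobolev-Gevrey continuity my plan is to reduce the statement to $L^2$ boundedness by conjugation. Setting
\[
Q := \langle D \rangle^{m'} e^{\rho \langle D \rangle^{1/\theta}} \circ p(x,D) \circ e^{-\rho \langle D \rangle^{1/\theta}} \langle D \rangle^{-m-m'},
\]
the desired mapping property is equivalent to $L^2$ boundedness of $Q$. I would then analyze $Q$ as a pseudodifferential operator via the symbolic calculus: the core piece is the conjugation of $p(x,D)$ by $e^{\pm\rho\langle D\rangle^{1/\theta}}$, whose symbol admits a formal expansion whose generic term contains a factor $\rho^{|\alpha|}/\alpha!$ multiplying $\partial^\alpha_\xi$ of the exponential and $D^\alpha_x p$. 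Using Fa\`a di Bruno to control $\partial^\alpha_\xi e^{\rho\pxi^{1/\theta}}$ together with the symbol estimates for $p$ and the flanking factors $\langle D\rangle^{\pm(m+m')}$, convergence of the expansion and boundedness of the resulting order-zero symbol (together with its derivatives) hold precisely under the smallness condition $|\rho| < \tilde{\delta} A^{-1/\theta}$ for a suitable $\tilde{\delta} > 0$. The Calder\'on-Vaillancourt theorem then supplies the $L^2$ bound.

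The hard part will be this conjugation step: quantifying the interplay between the factorial growth $\alpha!^\mu$ from the symbol class and the behavior of the Fa\`a di Bruno expansion of the infinite-order Fourier multiplier is what pins down $\tilde{\delta}$. Fortunately, the same type of analysis is developed later in Section \ref{section_conjugation_of_iP} and the Appendix to handle the conjugation of $iP$ by $e^\Lambda$, so the computation here fits naturally into the framework built in the rest of the paper.
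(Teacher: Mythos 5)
The paper does not give a self-contained proof of this proposition: it simply cites \cite[Theorem~2.2]{CappielloRodino} for the Gelfand--Shilov continuity and the duality statement, and \cite[Proposition~6.3]{KN} for the Sobolev--Gevrey bound. Your proposal essentially reconstructs those standard arguments and the overall architecture is correct: direct estimation with integration by parts and Leibniz for Step~1, transposition through the $L^2$-adjoint for Step~2, and conjugation by exponential Fourier multipliers plus Calder\'on--Vaillancourt for Step~3. All three match what the cited references do, so the proposal is consistent with the paper's (implicit) route.

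A few points to be careful about in Step~3. First, you say the computation ``fits naturally into the framework built in the rest of the paper,'' invoking Theorem~\ref{conjthmnew}; but note that Theorem~\ref{conjthmnew} is stated for symbols satisfying $|\partial_\xi^\alpha\partial_x^\beta p|\le C_A A^{|\alpha+\beta|}\alpha!^{\kappa}\beta!^{\kappa}\langle\xi\rangle_h^{m-|\alpha|}$, i.e.\ the non-tilde class $S^m_\kappa$, whereas here $p\in\tilde{S}^m_\mu$ gains \emph{no} decay from $\xi$-derivatives. You cannot apply that theorem verbatim; you need a (simpler) version adapted to the fact that the conjugating operator is a pure Fourier multiplier, so only the left composition $e^{\rho\langle D\rangle^{1/\theta}}\circ\textrm{op}\bigl(p(x,\xi)e^{-\rho\langle\xi\rangle^{1/\theta}}\bigr)$ requires an asymptotic expansion, and the absence of $\xi$-decay is compensated by the exponential factor $e^{-\rho\langle\xi\rangle^{1/\theta}}$ already present in the inner symbol. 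Second, the factor in the expansion is not $\rho^{|\alpha|}/\alpha!$ but rather $\tfrac1{\alpha!}\,\partial_\xi^\alpha e^{\rho\langle\xi\rangle^{1/\theta}}\,e^{-\rho\langle\xi\rangle^{1/\theta}}\,D_x^\alpha p(x,\xi)$, and the powers of $\rho$ only appear inside the Fa\`a di Bruno expansion of $\partial_\xi^\alpha e^{\rho\langle\xi\rangle^{1/\theta}}$; it is the combination of this with the factor $A^{|\alpha|}\alpha!^\mu$ from $D_x^\alpha p$ and the decay $\langle\xi\rangle^{-|\alpha|(1-1/\theta)}$ coming from the exponential derivatives that produces the convergence of the expansion modulo a $\theta$-regularizing remainder, and that fixes the threshold $\tilde\delta A^{-1/\theta}$. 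With these adjustments the plan is sound.
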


\begin{proof} The first assertion can be proved following readily the argument in the proof of \cite[Theorem 2.2]{CappielloRodino}. The second one is the content of \cite[Proposition 6.3]{KN}.
\end{proof}

By \cite[Proposition 6.4]{KN}, given $p \in S^{m}_{\mu}(\R^{2n};A)$ and $q \in S^{m'}_{\mu}(\R^{2n};A)$, the operator $p(x,D)q(x,D)$ is a pseudodifferential operator with symbol $s$ given for every $N \geq 1$ by
$$s(x,\xi)= \sum_{|\alpha| <N}(\alpha!)^{-1}\partial_\xi^\alpha p(x,\xi) D_x^\alpha q(x,\xi) +r_N(x,\xi),$$
where $r_N$ satisfies
$$|\partial_\xi^\alpha D_\xi^\beta r_N(x,\xi)| \leq C_{N,A}(C_N A)^{|\alpha+\beta|}(\alpha! \beta!)^\mu \langle \xi \rangle^{m+m'-N-|\alpha|}$$
for every $x,\xi \in \R^n, \alpha, \beta \in \N_0^n,$ with $C_N$ independent of $A, \alpha,\beta.$

We shall not develop a complete calculus for pseudodifferential operators of infinite order here since for our purposes we can limit to consider some particular examples of such operators, namely defined by a symbol of the form $e^{\lambda(x,\xi)}$ for some $\lambda \in S^{1/\kappa}_{\mu}(\R^{2n}), \kappa> 1.$ In fact, 
let $\lambda$ be a real-valued symbol satisfying 
the following condition:
\beqs\label{condition_Lambda}|\partial_\xi^\alpha \partial_x^\beta \lambda(x,\xi)| \leq \rho_0 A^{|\alpha+\beta|}(\alpha!\beta!)^\kappa \langle \xi \rangle^{\frac1{\kappa}-|\alpha|},
\eeqs
It is easy to verify that $e^{\pm \lambda} \in S^\infty_{\kappa;\kappa}(\R^{2n}).$  Let
$$e^{\pm \lambda}(x,D) u(x) =\textrm{op}(e^{\pm \lambda})u(x) = \int_{\R^n} e^{i\xi x \pm\lambda(x,\xi)}\hat{u}(\xi)\, \dslash \xi.$$ % \qquad \mathcal{F}(\{e^\lambda(x,D)\}^R u)(\xi) = \int_{\R^{n}}e^{i\xi x +\Lambda(x,\xi)}u(x)\, dx.$$
We also consider the so-called reverse operator $^{R}\{e^{\pm \lambda}(x,D)\}$, introduced in \cite[Proposition 2.13]{KW} as the transposed of $e^{\pm \lambda}(x,-D)$, see also \cite{KN}. Namely, $^{R}\{e^{\pm \lambda}(x,D)\}$ is defined as an oscillatory integral by
\begin{eqnarray*}^{R}\{e^{\pm \lambda}(x,D)\}u(x) &=& Os - \iint e^{i\xi (x-y) \pm \lambda(y,\xi)}u(y)\, dy \dslash \xi \\
	&=& \lim_{\varepsilon \to 0} \iint_{\R^{2n}} e^{i\xi (x-y) \pm \lambda(y,\xi)} \chi(\varepsilon y, \varepsilon \xi)u(y)\, dy \dslash \xi
	\end{eqnarray*}
for some $\chi \in \mathcal{S}_\kappa(\R^{2n})$ such that $\chi(0,0)=1.$
%Observe that since we are assuming that $\lambda$ is a real-valued symbol, in this case the operator $^{R}\{e^{\pm \lambda}(x,D)\}$ also coincides with the $L^{2}$ adjoint of $e^{\pm \Lambda}(x,D)$.

The following continuity result holds for the operators $e^\lambda(x,D)$ and $^{R}\{e^\lambda(x,D)\}$:

\begin{proposition}\label{contgev}
	Let $\lambda$ be a symbol as in \eqref{condition_Lambda}, $\rho,m\in\R$, $1<\theta\leq\kappa$. Then: \\
	i)
	If $\kappa > \theta$, the operators $e^{\lambda}(x,D)$ and $^{R}\{e^{\lambda}(x,D)\}$ map continuously $H^{m}_{\rho; \theta} (\R^n)$ into $H^{m}_{\rho-\delta; \theta}(\R^n)$ for every $\delta >0$; \\
	ii) If $\kappa =\theta $, there exists $\tilde\delta>0$ such that the map $e^{\lambda}(x,D):H^{m}_{\rho; \theta} (\R^n)\longrightarrow H^{m}_{\rho-\delta; \theta}(\R^n)$ is continuous for every $|\rho-\delta|<\tilde\delta A^{-1/\theta}$ and  $$\delta >C(\lambda):= \sup \{\lambda(x,\xi) /\langle \xi \rangle^{1/\theta}: (x,\xi) \in \R^{2n} \}.$$
Moreover, $^{R}\{e^\lambda (x,D)\}:H^{m}_{\rho; \theta} (\R^n)\longrightarrow H^{m}_{\rho-\delta; \theta}(\R^n)$ is continuous for every $|\delta|<\tilde\delta  A^{-1/\theta}$ and $\delta>C(\lambda).$	
\end{proposition}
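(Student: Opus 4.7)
The strategy is to reduce the weighted continuity to an $L^2$-boundedness assertion for a suitably conjugated pseudodifferential operator, and then to bound its full symbol via an oscillatory integral analysis and the Calder\'on--Vaillancourt theorem. Since $\langle D\rangle^m$ and $e^{\rho\langle D\rangle^{1/\theta}}$ are commuting Fourier multipliers, I would first reduce to the case $m=0$. The assertion that $e^\Lambda(x,D):H^m_{\rho;\theta}\to H^m_{\rho-\delta;\theta}$ is continuous then becomes the $L^2$-boundedness of
\begin{equation*}
T_{\rho,\delta} := e^{(\rho-\delta)\langle D\rangle^{1/\theta}}\circ e^\Lambda(x,D)\circ e^{-\rho\langle D\rangle^{1/\theta}}.
\end{equation*}

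Next, I would compute the full symbol $\sigma_{\rho,\delta}$ of $T_{\rho,\delta}$. Since the rightmost factor is a Fourier multiplier, the composition $e^\Lambda(x,D)\circ e^{-\rho\langle D\rangle^{1/\theta}}$ has symbol exactly $e^{\Lambda(x,\xi)-\rho\langle\xi\rangle^{1/\theta}}$. Composing on the left with $e^{(\rho-\delta)\langle D\rangle^{1/\theta}}$ gives, via the standard composition formula,
\begin{equation*}
\sigma_{\rho,\delta}(x,\xi) = Os - \iint e^{-iz\eta}\,e^{(\rho-\delta)\langle\xi+\eta\rangle^{1/\theta}-\rho\langle\xi\rangle^{1/\theta}}\,e^{\Lambda(x+z,\xi)}\,dz\,\dslash\eta.
\end{equation*}
Integration by parts in $(z,\eta)$, combined with the elementary inequality $\langle\xi+\eta\rangle^{1/\theta}\leq\langle\xi\rangle^{1/\theta}+C|\eta|^{1/\theta}$, the Gevrey bound \eqref{condition_Lambda} on $\Lambda$, and the standard estimate $|\partial_\xi^\alpha\langle\xi\rangle^{1/\theta}|\leq C_\alpha\alpha!^\theta\langle\xi\rangle^{1/\theta-|\alpha|}$, then yields
\begin{equation*}
|\partial_\xi^\alpha\partial_x^\beta \sigma_{\rho,\delta}(x,\xi)|\leq C_{\alpha,\beta}\, e^{\Lambda(x,\xi)-\delta\langle\xi\rangle^{1/\theta}}.
\end{equation*}
Controlling $|\alpha|+|\beta|$ up to the threshold dictated by Calder\'on--Vaillancourt then gives $L^2$-boundedness of $T_{\rho,\delta}$ as soon as the right-hand side is uniformly bounded in $(x,\xi)$.

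The two regimes separate naturally. In case (i), $\kappa>\theta$, we have $\Lambda(x,\xi)\leq\rho_0\langle\xi\rangle^{1/\kappa}=o(\langle\xi\rangle^{1/\theta})$, so $e^{\Lambda-\delta\langle\xi\rangle^{1/\theta}}$ is bounded for every $\delta>0$, imposing no restriction. In case (ii), $\kappa=\theta$, the bound $\Lambda(x,\xi)\leq C(\Lambda)\langle\xi\rangle^{1/\theta}$ is sharp, so uniform boundedness forces $\delta>C(\Lambda)$; the constraint $|\rho-\delta|<\tilde\delta A^{-1/\theta}$ is precisely the threshold needed for absolute convergence of the above oscillatory integral in the critical regime, an outer-composition analogue of \cite[Proposition 6.3]{KN}. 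The adjoint $\{e^\Lambda(x,D)\}^*$ is handled by the same scheme after representing its symbol as an oscillatory integral: since $\Lambda$ is real-valued, the adjoint symbol coincides with $e^\Lambda$ modulo controlled corrections, and after conjugation by $e^{\pm\rho\langle D\rangle^{1/\theta}}$ only $\delta$ enters the convergence condition of the outer oscillatory integral, which is why the requirement becomes $|\delta|<\tilde\delta A^{-1/\theta}$ rather than $|\rho-\delta|<\tilde\delta A^{-1/\theta}$. I expect the delicate bookkeeping in case (ii)---where factorial losses from integration by parts must be balanced precisely against the exponential weights while staying within the $A$-budget of \eqref{condition_Lambda}---to be the main technical hurdle.
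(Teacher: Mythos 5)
The paper does not prove this proposition; it is quoted from Kajitani--Nishitani \cite[Proposition 6.7]{KN}, so there is no in-paper proof to compare against. Your strategy --- reduce to $L^2$-boundedness of the conjugated operator $T_{\rho,\delta}=e^{(\rho-\delta)\langle D\rangle^{1/\theta}}e^\Lambda(x,D)e^{-\rho\langle D\rangle^{1/\theta}}$, represent its symbol as an oscillatory integral, and apply Calder\'on--Vaillancourt --- is the standard Kajitani--Nishitani route, and you correctly locate the origin of the threshold in case (ii): when $\kappa=\theta$, the $\eta$-decay produced by $z$-integration by parts (governed by the Gevrey index $\kappa$ and constant $A$ of $\Lambda$) and the $\eta$-growth of the outer weight $e^{(\rho-\delta)\langle\xi+\eta\rangle^{1/\theta}}$ are both of order $|\eta|^{1/\theta}$, so $|\rho-\delta|$ must stay small on the $A^{-1/\theta}$ scale.

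The gap is that the central estimate $|\partial_\xi^\alpha\partial_x^\beta\sigma_{\rho,\delta}(x,\xi)|\leq C_{\alpha,\beta}\,e^{\Lambda(x,\xi)-\delta\langle\xi\rangle^{1/\theta}}$ is asserted, not derived, and it is precisely here that the argument as sketched fails to close in case (i). When $\kappa>\theta$, $z$-integration by parts on $e^{\Lambda(x+z,\xi)}$ yields $\eta$-decay only of order $e^{-cA^{-1/\kappa}|\eta|^{1/\kappa}}$, whereas the outer weight contributes growth $e^{|\rho-\delta|\,|\eta|^{1/\theta}}$; since $1/\kappa<1/\theta$, the growth dominates for any fixed $\rho\neq\delta$, and the naive bookkeeping forces a smallness restriction on $|\rho-\delta|$ even though case (i) asserts there is none. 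Closing this requires a finer mechanism, e.g.\ the almost-analytic extension and contour-deformation technique that the paper itself develops in the Appendix (cf.\ \eqref{eq_almost_analytic_extension_def} and Propositions \ref{proposition_e_Lambda_circ_p}--\ref{proposition_p_phi_circ_reverse}), which exploits the extra gain $\langle\xi\rangle^{-|\alpha|}$ in \eqref{condition_Lambda} to turn the factorial loss from repeated differentiation into a genuine exponential gain. Your remark on the adjoint is likewise a plausibility argument rather than a derivation: the composed symbol is $Os - \iint e^{-iz\eta}\exp\bigl((\rho-\delta)\langle\xi+\eta\rangle^{1/\theta}-\rho\langle\xi\rangle^{1/\theta}+\Lambda(x+z,\xi+\eta)\bigr)\,dz\,\dslash\eta$, where $\eta$-growth enters both through the conjugating weight and through the frequency-shifted $\Lambda(\cdot,\xi+\eta)$, and deducing that the operative smallness constraint falls on $\delta$ alone requires an explicit bookkeeping that you have not supplied.
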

For the proof, see \cite[Proposition 6.7]{KN}.

\begin{definition}
Let $r>1.$ We denote by $\mathcal{K}_r$ the space of all $p \in C^\infty(\R^{2n})$ satisfying an estimate of the form
\begin{equation}\label{regestimate}|\partial_\xi^\alpha \partial_x^\beta p(x,\xi)| \leq C^{|\alpha+\beta|+1}\alpha!^r \beta!^r e^{-c|\xi|^{1/r}}
	\end{equation}
for some positive constants $C,c$. 
\end{definition}

\begin{remark} 
	Operators with symbols in $\mathcal{K}_{r}$ possess regularizing properties in the sense that they extend to linear and continuous maps from $(\gamma_0^\theta)'(\R^n)$ into  $\gamma^\theta(\R^n)$ if $\theta >r$. This can be easily proved taking into account that
	the estimate \eqref{regestimate} implies that for every $\varepsilon>0$ there exists $C_\varepsilon>0$ such that
	$$|\partial^{\alpha}_{\xi} \partial^{\beta}_{x}p(x,\xi)| \leq C_\varepsilon C^{|\alpha+\beta|}\alpha!^{r} \beta!^{r} e^{-\varepsilon|\xi|^{\frac{1}{\theta}}}, \quad x,\xi \in \R^{n}, \alpha,\beta \in \N^{n}_{0}.$$
	Moreover  the Fourier transform of an element of $(\gamma_0^\theta)'(\R^n)$ is a function which can be bounded by $Ce^{c|\xi|^{1/\theta}}$ for some $C,c>0$. Hence we can prove the continuity using the same argument of the proof of \cite[Lemma 3.2.12]{Rodino_linear_partial_differential_operators_in_gevrey_spaces}. We shall often refer to these operators as $r$-regularizing operators in the sequel.
\end{remark}

In the next result we shall need to work with the weight function $\langle\xi\rangle_h= (h^2+|\xi|^2)^{1/2}$
%$$
%\langle \xi \rangle_{h} = \sqrt{h^{2}+|\xi|^{2}},
%$$
where $h \geq 1$. We point out that we can replace $\langle \xi \rangle$ by $\langle \xi \rangle_{h}$ in all previous definitions and statements, and this replacement does not change the dependence of the constants, that is, all the previous constants are independent of $h$. Moreover, we also need the following stronger hypothesis on $\lambda(x,\xi):$
\begin{equation}\label{equation_stronger_hypothesis_on_Lambda}
	|\partial_\xi^\alpha \partial_x^\beta \lambda(x,\xi)| \leq \rho_0 A^{|\alpha+\beta|}\alpha!^{\kappa}\beta!^{\kappa} \langle \xi \rangle^{-|\alpha|}_{h}, 
\end{equation}
whenever $|\beta| \geq 1$. This means that if at least an $x-$derivative falls on $\lambda$, then we obtain a symbol of order $0 (< \frac{1}{\kappa})$. 

\begin{theorem}\label{conjthmnew}
	Let $p $ be a symbol satisfying 
	$$
	|\partial_\xi^\alpha \partial_x^\beta p(x,\xi)| \leq C_A A^{|\alpha+\beta|} \alpha!^{\kappa}\beta!^{\kappa} \langle \xi \rangle^{m-|\alpha|}_{h},$$
	and let $\lambda$ satisfy
		\begin{equation}\label{firstasslambda}
	|\partial_\xi^\alpha \lambda(x,\xi)| \leq \rho_0 A^{|\alpha|} \alpha!^{\kappa} \langle \xi \rangle^{\frac{1}{\kappa}-|\alpha|}_{h}
	\end{equation}
	and \eqref{equation_stronger_hypothesis_on_Lambda} for $\beta \neq 0$. 
	Then there are $\tilde{\delta} > 0$ and $h_0 = h_0(A) \geq 1$ such that if $\rho_0 \leq \tilde{\delta} A^{-\frac{1}{\kappa}}$ and $h \geq h_0$, then 
	\begin{multline}\label{asymptotic_expansion}
	e^\lambda(x,D) p(x,D) ^{R}\{e^{-\lambda}(x,D)\} = p(x,D)+ \textrm{op} \left( \sum_{1 \leq |\alpha+\beta| < N} \frac{1}{\alpha!\beta!} \partial^{\alpha}_{\xi} \{\partial^{\beta}_{\xi} e^{\lambda(x,\xi)} D^{\beta}_{x}p(x,\xi) D^{\alpha}_{x}e^{-\Lambda(x,\xi)} \} \right)  \\ 
	+ r_{N}(x,D) + r_{\infty}(x,D) ,
	\end{multline}
	where 
	\begin{equation*}
		|\partial^{\alpha}_{\xi}\partial^{\beta}_{x}r_N(x,\xi)| \leq C_{\rho_0,A,\kappa} (C_{\kappa}A)^{|\alpha+\beta|+2N}\alpha!^{\kappa}\beta!^{\kappa}N!^{2\kappa-1} \langle \xi \rangle_h^{m-(1-\frac{1}{\kappa})N - |\alpha|},
	\end{equation*}
	\begin{equation*}
		|\partial^{\alpha}_{\xi}\partial^{\beta}_{x}r_{\infty}(x,\xi)| \leq C_{\rho_0,A,\kappa} (C_{\kappa}A)^{|\alpha+\beta|+2N}\alpha!^{\kappa}\beta!^{\kappa}N!^{2\kappa-1} e^{-c_\kappa A^{-\frac{1}{\kappa}} \langle \xi \rangle_h^{\frac{1}{\kappa}}}.
	\end{equation*}
In particular, $r_\infty(x,D)$ is $\kappa$-regularizing.
\end{theorem}

\begin{remark} \label{remark6}
 Notice that the operator $e^{\lambda}(x,D) p(x,D) ^{R}\{e^{-\lambda}(x,D)\}$ has been already treated in \cite[Theorem 6.12]{KN} (see also \cite[Theorem 2.1]{NT}). Namely, Theorem \ref{conjthmnew} and Theorem 6.12 in \cite{KN} only differ in the form of the asymptotic expansion of the symbol of $e^{\lambda}(x,D) p(x,D) ^{R}\{e^{-\lambda}(x,D)\}$.
 Nevertheless, having an asymptotic expansion of the form \eqref{asymptotic_expansion} is crucial to perform the computations in Section \ref{section_conjugation_of_iP} and formula \eqref{asymptotic_expansion} cannot be easily derived from the statement of \cite[Theorem 6.12]{KN}. This is the reason why we prefer to state the result in the form above. On the other hand, the proof of Theorem \ref{conjthmnew} follows the same argument of the proof of Theorem 6.12 in \cite{KN} and the different forms of the asymptotic expansions are obtained just by applying Taylor's formula at different stages of the proofs. For this reason, in the Appendix at the end of the paper we propose just a sketch of the proof of Theorem \ref{conjthmnew} where we just emphasize the main differences from the proof of Theorem 6.12 in \cite{KN}. 
\end{remark}

%%%%%%%%%%%%%%%%%%%%%%%%%%%%%%%%%%%%%%%%%%%%%%%%%%%%%%%%%%%%%%%%%%%%%%%%%%%%%%%%%%%%%%%%%%%%%%%%%%%%%%%%%%%%%%%%%%
%%%%%%%%%%%%%%%%%%%%%%%%%%%%%%%%%%%%%%%%%%%%%%%%%%%%%%%%%%%%%%%%%%%%%%%%%%%%%%%%%%%%%%%%%%%%%%%%%%%%%%%%%%%%%%%%%%

\section{Change of variables} \label{changeofvariables}
In this section we introduce the main ingredients which define the change of variable that we need for the analysis of  problem \eqref{Cauchy_problem_in_introduction}. \\
For $M_2, M_1 > 0$ and $h \geq 1$ a large parameter, we define 
\begin{equation}\label{equation_definition_lambda_2}
	\lambda_2(x, \xi) = M_2 w\left(\frac{\xi}{h}\right) \int_{0}^{x} \langle y \rangle^{-\sigma}  
	\psi\left(\frac{\langle y \rangle}{\langle \xi \rangle^{2}_{h}}\right) dy, \quad (x, \xi) \in \R^2,
\end{equation}
\begin{equation}\label{equation_definition_lambda_1}
	\lambda_1(x, \xi) = M_1 w\left(\frac{\xi}{h}\right) \langle \xi \rangle^{-1}_{h} \int_{0}^{x} \langle y \rangle ^{-\frac{\sigma}{2}} \psi\left(\frac{\langle y \rangle}{\langle \xi \rangle^{2}_{h}}\right) dy, \quad (x, \xi) \in \R^2,
\end{equation}
where
$$
w (\xi) =
\begin{cases}
0, \qquad \qquad \qquad \quad \,\,\, |\xi| \leq 1,\\
-\text{sgn}(\partial_{\xi}a_3(t, \xi)) , \quad |\xi| > R_{a_3},
\end{cases}
\quad 
\psi (y) =
\begin{cases}
1, \quad |y| \leq \frac{1}{2}, \\
0, \quad |y| \geq 1,
\end{cases}
$$	
$|\partial^{\alpha}_{\xi} w(\xi)| \leq C_{w}^{\alpha + 1} \alpha!^{\mu}$, $|\partial^{\beta}_{y} \psi(y)| \leq C_{\psi}^{\beta + 1}\beta!^{\mu}$, with $\mu > 1$.
Notice that thanks to assumption (i) in Theorem \ref{mainthm}, the function $w$ is constant for $\xi \geq R_{a_3}$ and for $\xi \leq -R_{a_3}$.

\begin{lemma}\label{lemma_estimates_lambda_2}
	Let $\lambda_2(x, \xi)$ as in (\ref{equation_definition_lambda_2}). Then the following estimates hold:
	\begin{itemize}
		\item[(i)] $|\lambda_2(x, \xi)| \leq \frac{M_2}{1-\sigma} \langle \xi \rangle^{2(1-\sigma)}_{h}$;
		\item[(ii)] $|\partial^{\alpha}_{\xi}\lambda_2(x, \xi)| \leq C^{\alpha+1} \alpha!^{\mu} \langle \xi \rangle^{2(1-\sigma)-\alpha}_{h}$, for $\alpha \geq 1$;
		\item[(iii)] $|\partial^{\alpha}_{\xi}\lambda_2(x, \xi)| \leq C^{\alpha+1} \alpha!^{\mu} \langle \xi \rangle^{-\alpha}_h \langle x \rangle^{1-\sigma}$, for $\alpha \geq 0$;
		\item[(iv)] $| \partial^{\alpha}_{\xi} \partial^{\beta}_{x}\lambda_2(x, \xi)| \leq C^{\alpha+\beta +1} \alpha!^{\mu} \beta!^{\mu} 
		\langle \xi \rangle^{-\alpha}_{h} \langle x \rangle^{-\sigma -(\beta-1)}$, for $\alpha \geq 0, \beta \geq 1$.
	\end{itemize}
\end{lemma}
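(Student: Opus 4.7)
The key to all four estimates is to combine two geometric observations with a Leibniz/Fa\`a di Bruno expansion. First, the cutoff $\psi(\langle y\rangle/\langle\xi\rangle_h^2)$ is supported on $\{\langle y\rangle\leq\langle\xi\rangle_h^2\}$, and any derivative $\psi^{(k)}$ with $k\geq 1$ is supported in the annulus $A_\xi=\{\tfrac12\langle\xi\rangle_h^2\leq\langle y\rangle\leq\langle\xi\rangle_h^2\}$, where $\langle y\rangle\sim\langle\xi\rangle_h^2$. Second, $w$ is locally constant outside the ring $\{1\leq|\xi|\leq R_{a_3}\}$, so $\partial_\xi^{\alpha_1}w(\xi/h)$ with $\alpha_1\geq 1$ is supported where $|\xi|\sim h$, hence $h^{-\alpha_1}\sim\langle\xi\rangle_h^{-\alpha_1}$. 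Third, each $\xi$-derivative of the argument $\langle y\rangle/\langle\xi\rangle_h^2$ is $\lesssim|\xi|\langle y\rangle\langle\xi\rangle_h^{-4}\lesssim\langle\xi\rangle_h^{-1}$ on $A_\xi$, so Fa\`a di Bruno gives $|\partial_\xi^{\alpha_2}\psi(\langle y\rangle/\langle\xi\rangle_h^2)|\leq C^{\alpha_2+1}\alpha_2!^\mu\langle\xi\rangle_h^{-\alpha_2}$ on $A_\xi$.

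Item (i) follows at once from $|w|,|\psi|\leq 1$ and the support constraint, which give $|\lambda_2(x,\xi)|\leq M_2\int_0^{\langle\xi\rangle_h^2}\langle y\rangle^{-\sigma}\,dy\leq\tfrac{M_2}{1-\sigma}\langle\xi\rangle_h^{2(1-\sigma)}$. For (ii), expand $\partial_\xi^\alpha\lambda_2$ by Leibniz with $\alpha_1$ derivatives on $w$ and $\alpha_2=\alpha-\alpha_1$ on the integral. If $\alpha_2\geq 1$, restriction to $A_\xi$ makes $\langle y\rangle^{-\sigma}\sim\langle\xi\rangle_h^{-2\sigma}$, the $y$-length is $\sim\langle\xi\rangle_h^2$, and $\partial_\xi^{\alpha_2}\psi$ contributes $\alpha_2!^\mu\langle\xi\rangle_h^{-\alpha_2}$, so the integral is bounded by $C\alpha_2!^\mu\langle\xi\rangle_h^{2(1-\sigma)-\alpha_2}$. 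If $\alpha_2=0$ then $\alpha_1\geq 1$ forces $\langle\xi\rangle_h\sim h$, so $h^{-\alpha_1}\sim\langle\xi\rangle_h^{-\alpha_1}$, while the unmodified integral is bounded as in (i). Collecting terms yields the claim.

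For (iii), use the same Leibniz split, now estimating the integral in terms of $\langle x\rangle$. When $\alpha_2=0$, apply the elementary bound $\int_0^{|x|}\langle y\rangle^{-\sigma}\,dy\leq C\langle x\rangle^{1-\sigma}$ (valid for $\sigma\in(0,1)$). When $\alpha_2\geq 1$, the annular support must intersect $[0,x]$, so $\langle\xi\rangle_h^2\leq C\langle x\rangle$ and therefore $\langle\xi\rangle_h^{2(1-\sigma)}\leq C\langle x\rangle^{1-\sigma}$. In both cases the $\xi$-derivatives produce $\langle\xi\rangle_h^{-\alpha}$ as in (ii). For (iv), exploit the fundamental theorem of calculus to rewrite
$$\partial_x^\beta\lambda_2(x,\xi)=M_2\,w(\xi/h)\,\partial_x^{\beta-1}\bigl[\langle x\rangle^{-\sigma}\psi(\langle x\rangle/\langle\xi\rangle_h^2)\bigr],\qquad\beta\geq 1.$$
A Leibniz expansion of the $x$-derivative distributes $j$ derivatives onto $\langle x\rangle^{-\sigma}$ (producing $j!\langle x\rangle^{-\sigma-j}$, since $\langle x\rangle^{-\sigma}$ is real analytic) and $k=\beta-1-j$ onto $\psi$; when $k\geq 1$, $\psi^{(k)}$ restricts to $A_\xi$ where $\langle\xi\rangle_h^{-2}\sim\langle x\rangle^{-1}$, so each such derivative contributes $\langle x\rangle^{-1}$. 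Combining yields $|\partial_x^{\beta-1}[\cdots]|\leq C^\beta\beta!^\mu\langle x\rangle^{-\sigma-(\beta-1)}$; subsequent $\partial_\xi^\alpha$ derivatives produce $\alpha!^\mu\langle\xi\rangle_h^{-\alpha}$ by the same mechanism as in (ii).

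The main bookkeeping obstacle is to make the Fa\`a di Bruno expansions tight enough that the Gevrey-$\mu$ constants and the powers of $\langle\xi\rangle_h$ and $\langle x\rangle$ combine correctly; this is standard but must be done carefully so that the final constants depend on $M_2,\sigma,\mu,C_w,C_\psi$ only and \emph{not} on $h$. This $h$-independence is essential for the later application of Theorem \ref{conjthmnew}, where $h$ must be chosen large enough to absorb remainder terms.
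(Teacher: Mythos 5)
Your proof is correct and follows essentially the same strategy as the paper's: Leibniz expansion splitting derivatives between $w(\xi/h)$ and the integral, Fa\`a di Bruno for the nested $\psi$-composition, the support constraints of $w^{(\alpha_1)}$ (to absorb $h^{-\alpha_1}$ into $\langle\xi\rangle_h^{-\alpha_1}$) and of $\psi^{(j)}$ (to relate $\langle y\rangle$, $\langle x\rangle$, and $\langle\xi\rangle_h^2$), and the elementary bound $\int_0^{\min\{|x|,\langle\xi\rangle_h^2\}}\langle y\rangle^{-\sigma}\,dy\leq\frac{1}{1-\sigma}\min\{\langle\xi\rangle_h^{2(1-\sigma)},\langle x\rangle^{1-\sigma}\}$. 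The only cosmetic difference is that the paper obtains (ii) and (iii) in one stroke by carrying the $\min$ through, whereas you split into the cases $\alpha_2=0$ and $\alpha_2\geq1$; both routes yield the same $h$-independent constants.
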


\begin{proof}
	We denote by $\chi_{\xi}(x)$ the characteristic function of the set $\{ x \in \R : \langle x \rangle \leq \langle \xi \rangle^{2}_{h} \}$. Now note that 
	\begin{align*}
		|\lambda_2(x,\xi)| \leq M_2 \int_0^{|x|} \langle y \rangle^{-\sigma} \chi_{\xi}(y) dy  = M_2 \int^{\min\{|x|, \langle \xi \rangle^{2}_{h}\}}_{0} \langle y \rangle^{-\sigma} dy,
	\end{align*}
	hence 
	$$
	|\lambda_2(x,\xi)| \leq \frac{M_2}{1-\sigma} \min\{ \langle \xi \rangle^{2(1-\sigma)}_{h}, \langle x \rangle^{1-\sigma}\}.
	$$
	
	For $\alpha \geq 1$ we have 
	\begin{align*}
		|\partial^{\alpha}_{\xi} \lambda_2(x,\xi)| &\leq M_2 \sum_{\alpha_1 + \alpha_2 = \alpha} \frac{\alpha!}{\alpha_1!\alpha_2!} 
		\left| w^{(\alpha_1)}\left( \frac{\xi}{h} \right) \right| h^{-\alpha_1} \left| \int^{x}_{0} \chi_{\xi}(y) \langle y \rangle^{-\sigma} \partial^{\alpha_2}_{\xi} 
		\psi \left( \frac{\langle y \rangle}{\langle \xi \rangle^{2}_{h}} \right) \right| \, dy \\
		&\leq M_2 \sum_{\alpha_1 + \alpha_2 = \alpha} \frac{\alpha!}{\alpha_1!\alpha_2!} \left| w^{(\alpha_1)}\left( \frac{\xi}{h} \right) \right| h^{-\alpha_1}
		\int^{|x|}_{0} \chi_{\xi}(y) \langle y \rangle^{-\sigma} \\
		&\times \sum_{j=1}^{\alpha_2} \frac{ \left|\psi^{(j)} \left( \frac{\langle y \rangle}{\langle \xi \rangle^{2}_{h}} \right) \right| }{j!} \sum_{\gamma_1 + \ldots + \gamma_j = \alpha_2} \frac{\alpha_2!}{\gamma_1!\ldots \gamma_j!} \prod_{\nu = 1}^{j} \langle y \rangle |\partial^{\gamma_{\nu}}_{\xi} \langle \xi \rangle^{-2}_{h}| dy	\\
		&\leq M_2 \sum_{\alpha_1 + \alpha_2 = \alpha} \frac{\alpha!}{\alpha_1!\alpha_2!} C_{w}^{\alpha_1+1}\alpha_1!^{\mu} \langle \xi \rangle^{-\alpha_1}_{h} \langle R_{a_3} \rangle^{\alpha_1} \\
		&\times \tilde{C}_{\psi}^{\alpha_2 + 1} \alpha_2!^{\mu} \langle \xi \rangle_{h}^{-\alpha_2} 
		\int^{|x|}_{0} \chi_{\xi}(y) \langle y \rangle^{-\sigma}  dy  \\
		&\leq M_2 C_{w,\psi, R_{a_3}}^{\alpha+1} \alpha!^{\mu} \langle \xi \rangle^{-\alpha}_{h} \int^{|x|}_{0} \chi_{\xi}(y) \langle y \rangle^{-\sigma}  dy	 \\
		&\leq \frac{M_2}{1-\sigma} C_{w,\psi, R_{a_3}}^{\alpha+1} \alpha!^{\mu} \langle \xi \rangle^{-\alpha}_{h}
		\min \{ \langle \xi \rangle^{2(1-\sigma)}_{h}, \langle x \rangle^{1-\sigma} \}.
	\end{align*}

	For $\alpha \geq 0$ and $\beta \geq 1$ we have
	\begin{align*}
		|\partial^{\alpha}_{\xi} \partial^{\beta}_{x} &\lambda_2(x,\xi)| \leq 
		M_2 \sum_{\overset{\alpha_1+\alpha_2 = \alpha}{\beta_1 + \beta_2 = \beta-1}} \frac{\alpha!}{\alpha_1!\alpha_2!} \frac{(\beta-1)!}{\beta_1!\beta_2!}
		\left| w^{(\alpha_1)}\left( \frac{\xi}{h} \right) \right| |\partial^{\beta_{1}}_{x} \langle x \rangle^{-\sigma}| 
		\left| \partial^{\alpha_2}_{\xi} \partial^{\beta_2}_{x} \psi\left( \frac{\langle x \rangle }{ \langle \xi \rangle^{2}_{h} } \right) \right| \\
		&\leq M_2 \sum_{\overset{\alpha_1+\alpha_2 = \alpha}{\beta_1 + \beta_2 = \beta-1}} \frac{\alpha!}{\alpha_1!\alpha_2!} \frac{(\beta-1)!}{\beta_1!\beta_2!}
		C_{w}^{\alpha_1+1} \alpha_1!^{\mu} \langle R_{a_3} \rangle^{\alpha_1} \langle \xi \rangle^{-\alpha_1}_{h} C^{\beta_1} \beta_1! \langle x \rangle^{-\sigma - \beta_1} \\
		&\times \chi_{\xi}(x) \sum_{j = 1}^{\alpha_2 + \beta_2} \frac{\left| \psi^{(j)} \left( \frac{\langle x \rangle}{\langle \xi \rangle^{2}_{h}} \right) \right| }{ j! } \sum_{\overset{\gamma_1 + \ldots + \gamma_j = \alpha_2}{\lambda_1 + \ldots + \lambda_j = \beta_2}} \frac{\alpha_2!\beta_2!}{\gamma_1!\lambda_1! \ldots \gamma_j!\lambda_j!} \prod_{\nu = 1}^{j} 
		|\partial^{\lambda_{\nu}}_{x} \langle x \rangle| |\partial^{\gamma_\nu}_{\xi} \langle \xi \rangle^{-2}_{h}| \\
		&\leq M_2 \sum_{\overset{\alpha_1+\alpha_2 = \alpha}{\beta_1 + \beta_2 = \beta-1}} \frac{\alpha!}{\alpha_1!\alpha_2!} \frac{(\beta-1)!}{\beta_1!\beta_2!}
		C_{w}^{\alpha_1+1} \alpha_1!^{\mu} \langle R_{a_3} \rangle^{\alpha_1} \langle \xi \rangle^{-\alpha_1}_{h} C^{\beta_1} \beta_1! \langle x \rangle^{-\sigma - \beta_1} \\
		&\times \tilde{C}_{\psi}^{\alpha_2+\beta_2+1} \alpha_2!^{\mu} \beta_2!^{\mu} \langle x \rangle^{-\beta_2} \langle \xi \rangle^{-\alpha_2}_{h} \\
		&\leq M_2 C_{\psi, w, R_{a_3}}^{\alpha+\beta + 1} \alpha!^{\mu} (\beta-1)!^{\mu} \langle x \rangle^{-\sigma - (\beta-1)} \langle \xi \rangle^{-\alpha}_{h}.
	\end{align*}
\end{proof}

For $\lambda_1$ we have the following estimates which can be proved via the same arguments used for $\lambda_2$. We omit the proof for the sake of brevity.

\begin{lemma}\label{lemma_estimates_lambda_1}
	Let $\lambda_1(x,\xi)$ as in (\ref{equation_definition_lambda_1}). Then
	\begin{itemize}
		\item[(i)] $|\lambda_1(x, \xi)| \leq \frac{M_1}{1-\frac{\sigma}{2}} \langle \xi \rangle^{1-\sigma}_{h}$;
		\item[(ii)] $|\partial^{\alpha}_{\xi}\lambda_1(x, \xi)| \leq C^{\alpha+1} \alpha!^{\mu} \langle \xi \rangle^{1-\sigma - \alpha}_{h}$, for $\alpha \geq 1$;
		\item[(iii)]$|\partial^{\alpha}_{\xi}\lambda_1(x, \xi)| \leq C^{\alpha+1} \alpha!^{\mu} \langle \xi \rangle^{-1-|\alpha|}_h \langle x \rangle^{1-\frac{\sigma}{2} }$, for $\alpha \geq 0$;
		\item[(iv)]$|\partial^{\alpha}_{\xi}\lambda_1(x, \xi)| \leq C^{\alpha+1} \alpha!^{\mu} \langle \xi \rangle^{-|\alpha|}_h \langle x \rangle^{1-\sigma}$, for $\alpha \geq 0$;
		\item[(v)] $| \partial^{\alpha}_{\xi} \partial^{\beta}_{x}\lambda_1(x, \xi)| \leq C^{\alpha+\beta +1} \alpha!^{\mu} \beta!^{\mu} 
		\langle \xi \rangle^{-1-\alpha}_{h} \langle x \rangle^{-\frac{\sigma}{2} -(\beta-1)}$, for $\alpha \geq 0, \beta \geq 1$;
		\item[(vi)] $| \partial^{\alpha}_{\xi} \partial^{\beta}_{x}\lambda_1(x, \xi)| \leq C^{\alpha+\beta +1} \alpha!^{\mu} \beta!^{\mu} 
		\langle \xi \rangle^{-\alpha}_{h} \langle x \rangle^{-\sigma - (\beta-1)}$, for $\alpha \geq 0, \beta \geq 1$.
	\end{itemize}
\end{lemma}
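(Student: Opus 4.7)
The proof plan is to run through the six estimates following closely the scheme of Lemma \ref{lemma_estimates_lambda_2}, with two main modifications to accommodate the new form of $\lambda_1$: the extra prefactor $\langle \xi \rangle_h^{-1}$ and the different exponent $-\sigma/2$ in the integrand. Writing
$$
\lambda_1(x,\xi) = M_1 w\!\left(\frac{\xi}{h}\right)\langle \xi \rangle_h^{-1}\, I(x,\xi), \qquad I(x,\xi) = \int_0^x \langle y \rangle^{-\sigma/2} \psi\!\left(\frac{\langle y \rangle}{\langle \xi \rangle_h^2}\right) dy,
$$
I would keep two controls on $I$ at hand: the support bound $|I(x,\xi)| \leq C \langle \xi \rangle_h^{2-\sigma}$ (obtained as in the proof of Lemma \ref{lemma_estimates_lambda_2}(i), using $\langle y\rangle \leq \langle \xi\rangle_h^2$ on $\supp \psi$), and the trivial $x$-bound $|I(x,\xi)| \leq C \langle x \rangle^{1-\sigma/2}$.

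For items (i), (ii), (iii) and (v) the argument is a direct adaptation of the Leibniz/Faà di Bruno computations performed for $\lambda_2$. Estimate (i) follows from the support bound on $I$ combined with the prefactor $\langle \xi \rangle_h^{-1}$. For (ii), applying $\partial_\xi^\alpha$ distributes the derivatives among $w(\xi/h)$ (which gives $\langle \xi \rangle_h^{-\alpha_1}$ since $w$ is constant for $|\xi|>R_{a_3}$), $\langle \xi \rangle_h^{-1}$ (each derivative giving one extra $\langle \xi \rangle_h^{-1}$), and, via Faà di Bruno, the composition $\psi(\langle \cdot \rangle/\langle \xi \rangle_h^2)$; combining with the support bound on $I$ yields $\langle \xi \rangle_h^{1-\sigma-\alpha}$. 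For (iii) one uses instead the trivial $x$-bound on $I$ together with the same $\xi$-derivative analysis, carrying one extra $\langle \xi \rangle_h^{-1}$ from the prefactor. For (v), since $\beta \geq 1$, the innermost $x$-derivative eliminates the integral sign, and the remaining $\beta-1$ derivatives fall either on $\langle x \rangle^{-\sigma/2}$ (producing $\langle x \rangle^{-\sigma/2-(\beta-1)}$) or on $\psi(\langle x \rangle/\langle \xi \rangle_h^2)$ via Faà di Bruno, each chain factor contributing an extra $\langle x\rangle^{-1}$ on $\supp\psi$; the $\alpha$ $\xi$-derivatives are handled exactly as in (ii) and provide $\langle \xi \rangle_h^{-1-\alpha}$.

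The two remaining estimates (iv) and (vi) require the simple but essential trick of exploiting the support of $\psi$ to trade $\langle y \rangle^{-\sigma/2}$ for $\langle y \rangle^{-\sigma}$. On $\supp \psi(\langle y \rangle/\langle \xi \rangle_h^2)$ we have $\langle y \rangle \leq \langle \xi \rangle_h^2$, hence
$$
\langle y \rangle^{-\sigma/2} = \langle y \rangle^{-\sigma} \langle y \rangle^{\sigma/2} \leq \langle y \rangle^{-\sigma} \langle \xi \rangle_h^\sigma.
$$
Plugging this into $I(x,\xi)$ (together with $\int_0^{|x|}\langle y\rangle^{-\sigma}\chi_\xi(y)\,dy \leq C\langle x\rangle^{1-\sigma}$ from the proof of Lemma \ref{lemma_estimates_lambda_2}) gives $|I(x,\xi)|\leq C\langle \xi \rangle_h^{\sigma} \langle x \rangle^{1-\sigma}$, so the prefactor $\langle \xi\rangle_h^{-1}$ produces an overall $\langle \xi \rangle_h^{\sigma-1} \leq 1$ (since $\sigma<1$), which establishes (iv). For (vi), the same substitution applied after the extra $\beta-1$ $x$-derivatives converts $\langle x \rangle^{-\sigma/2-(\beta-1)}$ into $\langle x \rangle^{-\sigma-(\beta-1)} \langle \xi \rangle_h^\sigma$, and the harmless factor $\langle \xi \rangle_h^{\sigma-1}\leq 1$ gets absorbed into the $\langle \xi \rangle_h^{-\alpha}$ weight from the $\xi$-derivatives.

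The main (modest) obstacle is the combinatorial bookkeeping of the Leibniz and Faà di Bruno expansions so that the Gevrey factorials $\alpha!^\mu \beta!^\mu$ appear with the expected constants; this part is entirely analogous to the corresponding step in Lemma \ref{lemma_estimates_lambda_2} and relies on the standard $\mu$-Gevrey multinomial identities already used there.
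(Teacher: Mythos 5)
Your argument is correct and follows the approach the paper itself intends (the paper omits this proof, stating only that it follows ``via the same arguments used for $\lambda_2$''). Items (i), (ii), (iii), (v) are indeed direct transcriptions of the Leibniz/Fa\`a di Bruno computation from Lemma \ref{lemma_estimates_lambda_2}, with the prefactor $\langle\xi\rangle_h^{-1}$ and the integrand $\langle y\rangle^{-\sigma/2}$ plugged in. The one genuinely new ingredient you supply is the trade $\langle y\rangle^{-\sigma/2}\le\langle y\rangle^{-\sigma}\langle\xi\rangle_h^{\sigma}$ on $\supp\psi$ for items (iv) and (vi), which have no counterpart in Lemma \ref{lemma_estimates_lambda_2}; this is exactly the right observation, and the resulting factor $\langle\xi\rangle_h^{\sigma-1}\le 1$ (valid since $\sigma<1$ and $h\ge 1$) makes those bounds close. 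No gaps.
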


\begin{remark}
 From Lemmas $\ref{lemma_estimates_lambda_2}$ and $\ref{lemma_estimates_lambda_1}$ we conclude $\lambda_2, \lambda_1 \in \textbf{\textrm{SG}}^{0, 1-\sigma}_{\mu}(\R^{2})$, $\lambda_1 \in S^{1-\sigma}_{\mu}(\R^{2})$ and $\lambda_2 \in S^{2(1-\sigma)}_{\mu}(\R^{2})$. Hence, setting $\tilde{\Lambda}=\lambda_1+\lambda_2$, we get $e^{\tilde{\Lambda}} \in S^\infty_{\mu; \frac1{2(1-\sigma)}}(\R^2) \cap \textbf{\textrm{SG}}^{0,\infty}_{\mu;\frac{1}{1-\sigma}}(\R^{2})$, cf. \cite{AAC3evolGelfand-Shilov}.  
\end{remark}

To construct the inverse of $e^{\tilde{\Lambda}}(x,D)$ we need to use the reverse operator $^{R}\{e^{-\tilde{\Lambda}}(x,D)\}$.
We have the following result which expresses the inverse of $e^{\tilde{\Lambda}}(x,D)$ 
in terms of composition of $^{R}\{e^{-\tilde{\Lambda}}(x,D)\}$ with a Neumann series.

\begin{lemma}\label{lemma_inverse_of_e_power_tilde_Lambda}
%	Let $\mu > 1$ such that $\theta > 4\mu-3$. For $h > 0$ large enough, the operator $e^{\tilde{\Lambda}}(x,D)$ is invertible on $L^2(\R)$ and its inverse is given by 
%	$$
%	\{e^{\tilde{\Lambda}}(x,D)\}^{-1} =\, ^{R}\{e^{-\tilde{\Lambda}}(x,D)\} \circ \sum_{j \geq 0} (-r(x,D))^{j}, 
%	$$
%	for some $r \in \textbf{\textrm{SG}}^{-1,-\sigma}_{2\mu-1}(\R^{2})(\subset S^{-1}_{2\mu - 1}(\R^{2}))$ such that $r \sim \sum_{\gamma \geq 1} \frac{1}{\gamma!} \partial^{\gamma}_{\xi}(e^{\tilde{\Lambda}} D^{\gamma}_{x} e^{-\tilde{\Lambda}})$ in $\textbf{\textrm{SG}}^{-1,-\sigma}_{2\mu-1}(\R^{2})$. Moreover, $\sum (-r(x,D))^{j}$ has symbol in $\textbf{\textrm{SG}}^{0,0}_{\mu'}(\R^{2}) \subset S^{0}_{\mu'}(\R^{2})$ for every $\mu' >4\mu-3$ and maps continuously $H^{m'}_{\rho';\theta}$ into itself for any $\rho', m' \in \R$.  Finally we have	\begin{equation}\label{equation_inverse_of_e_power_tilde_Lambda_in_a_precise_way}
%		\{e^{\tilde{\Lambda}}(x,D) \}^{-1} =\, ^{R}\{ e^{-\tilde{\Lambda}}(x,D)\} \circ \textrm{op} ( 1 - i\partial_{\xi} \partial_{x} \tilde{\Lambda} - \frac{1}{2}\partial^{2}_{\xi}(\partial^{2}_{x}\tilde{\Lambda} - [\partial_x \tilde{\Lambda}]^{2}) - [\partial_{\xi} \partial_{x} \tilde{\Lambda} ]^{2} + q_{-3} ),
%	\end{equation}
%where $q_{-3} \in \textbf{\textrm{SG}}^{-3,-3\sigma}_{\mu'}(\R^{2}) \subset S^{-3}_{\mu'}(\R^{2})$.
Let $\mu > 1$. For $h \geq 1$ large enough, the operator $e^{\tilde{\Lambda}}(x,D)$ is invertible %on $L^2(\R)$ 
and its inverse is given by 
$$
\{e^{\tilde{\Lambda}}(x,D)\}^{-1} = \hskip2pt ^R  \{e^{-\tilde{\Lambda}}(x,D)\} \circ \sum_{j \geq 0} (-r(x,D))^{j}, 
$$
for some $r = \tilde{r} + \bar{r}$, where $\tilde{r} \in \textbf{\textrm{SG}}^{-1,-\sigma}_{\mu}(\R^{2})$, $\bar{r} \in \Sigma_{\kappa}(\R^{2})$ for every 
$\kappa> 2\mu-1$ and 
$$
\tilde{r} - \sum_{1 \leq \gamma \leq N} \frac{1}{\gamma!} \partial^{\gamma}_{\xi}(e^{\tilde{\Lambda}} D^{\gamma}_{x} e^{-\tilde{\Lambda}}) \in\textbf{\textrm{SG}}^{-1-N,-\sigma-\sigma N}_{\mu}(\R^{2}), \quad \forall N \geq 1.
$$ 
Moreover, $\sum (-r(x,D))^{j}$ has symbol in $\textbf{\textrm{SG}}^{0,0}_{\mu}(\R^{2}) + \Sigma_{\kappa}(\R^{2})$ for every $\kappa >2\mu-1$. Finally, we have	
\begin{equation}\label{equation_inverse_of_e_power_tilde_Lambda_in_a_precise_way}
	\{e^{\tilde{\Lambda}}(x,D) \}^{-1} = \hskip2pt ^R \{e^{-\tilde{\Lambda}}(x,D)\} \circ \textrm{op} ( 1 - i\partial_{\xi} \partial_{x} \tilde{\Lambda} - \frac{1}{2}\partial^{2}_{\xi}(\partial^{2}_{x}\tilde{\Lambda} - [\partial_x \tilde{\Lambda}]^{2}) - [\partial_{\xi} \partial_{x} \tilde{\Lambda} ]^{2} + q_{-3} ),
\end{equation}
where $q_{-3} \in \textbf{\textrm{SG}}^{-3,-3\sigma}_{\mu}(\R^{2}) + \Sigma_{\kappa}(\R^{2})$.
\end{lemma}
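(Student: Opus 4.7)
The plan is to write the inverse of $e^{\tilde{\Lambda}}(x,D)$ as a composition of $\{e^{-\tilde{\Lambda}}(x,D)\}^{*}$ (an ``approximate right inverse'') with a Neumann correction. Concretely, I would first establish
\begin{equation*}
e^{\tilde{\Lambda}}(x,D)\circ\{e^{-\tilde{\Lambda}}(x,D)\}^{*}=I+r(x,D)
\end{equation*}
for some symbol $r$ of order $-1$, and then set $\{e^{\tilde{\Lambda}}(x,D)\}^{-1}=\{e^{-\tilde{\Lambda}}(x,D)\}^{*}\circ(I+r(x,D))^{-1}$, inverting $I+r(x,D)$ via the Neumann series $\sum_{j\ge 0}(-r(x,D))^{j}$.

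To produce the formula for $r$, I would apply Theorem \ref{conjthmnew} with $p\equiv 1$ and $\Lambda=\tilde{\Lambda}$. Since $D_{x}^{\beta}1=0$ for $|\beta|\ge 1$, the double sum collapses to the terms with $\beta=0$, giving precisely $r\sim\sum_{\gamma\ge 1}\tfrac{1}{\gamma!}\partial_{\xi}^{\gamma}(e^{\tilde{\Lambda}}D_{x}^{\gamma}e^{-\tilde{\Lambda}})$, while the remainders $r_{N}$ and $r_{\infty}$ can be absorbed into $r$ and a smoothing error. Expanding $D_{x}^{\gamma}e^{-\tilde{\Lambda}}$ via Fa\`a di Bruno produces $e^{-\tilde{\Lambda}}$ times a polynomial whose every monomial carries at least one factor $\partial_{x}^{\beta}\tilde{\Lambda}$ with $\beta\ge 1$; by Lemma \ref{lemma_estimates_lambda_2}(iv) and Lemma \ref{lemma_estimates_lambda_1}(vi) each such factor brings decay $\langle x\rangle^{-\sigma}$ (with further $\langle x\rangle^{-(\beta-1)}$ gain), while the outer $\partial_{\xi}^{\gamma}$, distributed by Leibniz, yields the gain $\langle\xi\rangle_{h}^{-\gamma}$. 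Standard estimates on Borel sums of $S_{\mu}$-symbols then place $r$ in $\SG^{-1,-\sigma}_{2\mu-1}(\R^{2})$, the index $2\mu-1$ reflecting the usual Gevrey loss in the resummation.

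For invertibility, observe that because $w(\xi/h)$ vanishes for $|\xi|\le h$, the symbol of $r$ is supported in $|\xi|>h$, where the order $-1$ bound gives $|\partial_{x}^{\alpha}\partial_{\xi}^{\beta}r|\le C_{\alpha,\beta}h^{-1}$. The Calder\'on--Vaillancourt theorem therefore yields $\|r(x,D)\|_{L^{2}\to L^{2}}\le C/h$, so for $h$ larger than some $h_{0}$ the Neumann series converges in $\mathcal{L}(L^{2})$. Each composition $(-r)^{j}$ lies in $\SG^{-j,-j\sigma}_{2\mu-1}(\R^{2})$, and a Borel summation in $j$ (whose terms have improving order $-j$) produces a symbol in $\SG^{0,0}_{\mu'}(\R^{2})\subset S^{0}_{\mu'}(\R^{2})$ for every $\mu'>4\mu-3$; since $\theta>4\mu-3$ permits $\mu'<\theta$, the continuity result for $S^{0}_{\mu'}$-operators stated in Section \ref{pseudos} ensures boundedness on every $H^{m'}_{\rho';\theta}$. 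A parallel calculation $\{e^{-\tilde{\Lambda}}(x,D)\}^{*}\circ e^{\tilde{\Lambda}}(x,D)=I+\tilde{r}(x,D)$ with $\tilde{r}$ in the same class shows the inverse is two-sided.

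Finally, to obtain \eqref{equation_inverse_of_e_power_tilde_Lambda_in_a_precise_way}, a direct computation gives $e^{\tilde{\Lambda}}D_{x}e^{-\tilde{\Lambda}}=i\partial_{x}\tilde{\Lambda}$ and $e^{\tilde{\Lambda}}D_{x}^{2}e^{-\tilde{\Lambda}}=\partial_{x}^{2}\tilde{\Lambda}-(\partial_{x}\tilde{\Lambda})^{2}$, so modulo $\SG^{-3,-3\sigma}_{2\mu-1}$ one has $r=i\partial_{\xi}\partial_{x}\tilde{\Lambda}+\tfrac{1}{2}\partial_{\xi}^{2}(\partial_{x}^{2}\tilde{\Lambda}-(\partial_{x}\tilde{\Lambda})^{2})$; substituting into the Neumann expansion and keeping terms down to order $-2$ (where, up to an $\SG^{-3,-3\sigma}$ error, the composition of symbols coincides with their pointwise product) produces the stated formula, with $q_{-3}\in\SG^{-3,-3\sigma}_{\mu'}$. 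The main obstacle is the bookkeeping in the second paragraph: tracking the $\langle x\rangle^{-\sigma}$ decay through Fa\`a di Bruno together with the Leibniz expansion of $\partial_{\xi}^{\gamma}$ and controlling the factorials so that the two Borel resummations (in $\gamma$ for $r$, and in $j$ for the Neumann series) both land in $\SG$ classes with the correct $(-j,-j\sigma)$ bidegrees and Gevrey indices, passing from $2\mu-1$ at the $r$-level to $\mu'>4\mu-3$ at the level of the summed series.
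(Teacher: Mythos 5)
Your proposal reproduces what is, in all likelihood, the argument behind the paper's proof. In fact the paper does not prove this lemma in situ at all: it simply cites \cite[Lemma 4]{AAC3evolGelfand-Shilov}, noting that the two lemmas are the same up to a change of functional setting. Your reconstruction — take $p\equiv 1$ in Theorem~\ref{conjthmnew} to write $e^{\tilde\Lambda}(x,D)\circ\{e^{-\tilde\Lambda}(x,D)\}^{*}=I+r(x,D)$ with $r$ of order $-1$, push the $\langle x\rangle^{-\sigma}$ decay through Fa\`a di Bruno using Lemmas~\ref{lemma_estimates_lambda_2}(iv) and \ref{lemma_estimates_lambda_1}(vi), use the $h$-dependence of $\langle\xi\rangle_{h}^{-1}$ (equivalently the support of $w(\cdot/h)$) plus Calder\'on--Vaillancourt to make $\|r(x,D)\|_{\mathcal L(L^{2})}<1$ for $h$ large, invert by Neumann series, and finally expand the first two terms of $r$ explicitly to obtain \eqref{equation_inverse_of_e_power_tilde_Lambda_in_a_precise_way} — is exactly the standard route, and the sign/index bookkeeping you carry out (in particular $e^{\tilde\Lambda}D_{x}^{2}e^{-\tilde\Lambda}=\partial_{x}^{2}\tilde\Lambda-(\partial_{x}\tilde\Lambda)^{2}$ and the leading $-[\partial_{\xi}\partial_{x}\tilde\Lambda]^{2}$ contribution from $(-r)^{2}$) matches the formula in the statement. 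The passage $\mu\mapsto 2\mu-1\mapsto \mu'>4\mu-3$ is the usual Gevrey degradation in two successive asymptotic resummations, and you have the exponents right, including the constraint $\theta>4\mu-3$ needed so that one may choose $\mu'<\theta$.

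Two points deserve a sharper justification than you give. First, to show that the Neumann sum is bounded on \emph{every} $H^{m'}_{\rho';\theta}$ (arbitrary $\rho'$, not just $|\rho'|<\tilde\delta A^{-1/\theta}$), the blanket continuity statement for $S^{0}_{\mu'}$ operators quoted in Section~\ref{pseudos} is not quite enough; one needs to exploit the special structure of $\sum(-r)^{j}$ as the inverse of $I+r$, or equivalently the cancellation in $e^{\tilde\Lambda}\{e^{-\tilde\Lambda}\}^{*}$. Second, the "parallel calculation" yielding $\{e^{-\tilde\Lambda}\}^{*}\circ e^{\tilde\Lambda}=I+\tilde r$ is not literally an application of Theorem~\ref{conjthmnew} (which expands $e^{\Lambda}\,p\,\{e^{-\Lambda}\}^{*}$); you need either the companion composition result in the other order, or an adjoint argument, to conclude the inverse is two-sided. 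Neither of these is a structural gap — they are precisely the kind of technical details the cited Lemma 4 would have to settle — but they should be named rather than waved through.
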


\begin{proof}
	The proof follows directly from \cite[Lemma 4]{AAC3evolGelfand-Shilov}. 
\end{proof}

\begin{remark}
	Since we can choose $\mu > 1$ arbitrarily close to $1$, we may assume $2\mu-1 < \theta$. Therefore we can take $\kappa < \theta$ in the above lemma.  
\end{remark}

%%%%%%%%%%%%%%%%%%%%%%%%%%%%%%%%%%%%%%%%%%%%%%%%%%%%%%%%%%%%%%%%%%%%%%%%%%%%%%%%%%%%%%%%%%%%%%%%%%%%%%%%%%%%%%%%%%%%

\section{Sobolev well-posedness for the Cauchy problem \eqref{equation_conjugated_cauchy_problem}
}
\label{section_conjugation_of_iP}

In this section we will perform the conjugation of $iP$ in \eqref{diffP},\eqref{Cauchy_problem_in_introduction} by the operator $e^{\Lambda}(t, x, D_x)$ defined by \eqref{nuovadefLambda}, where $k\in C^1([0,T];\R)$ is a positive non increasing function and $\tilde{\Lambda}= \lambda_1+\lambda_2,$ with $\lambda_1,\lambda_2$ defined by \eqref{equation_definition_lambda_1}, \eqref{equation_definition_lambda_2} respectively. Namely, we explicitly compute the operator $P_\Lambda$ in \eqref{equation_conjugated_cauchy_problem} and prove that the Cauchy problem \eqref{equation_conjugated_cauchy_problem} is well-posed in Sobolev spaces $H^m$, $m\in\R$. For this purpose we shall use Theorem \ref{conjthmnew}.
\\ Before performing the conjugation, let us make some remarks. By Lemmas \ref{lemma_estimates_lambda_2} and \ref{lemma_estimates_lambda_1} we get 
$$
|\partial^{\alpha}_{\xi} \partial^{\beta}_{x} \tilde{\Lambda} (x,\xi)| \leq 
\begin{cases}
	C_{\tilde{\Lambda}}^{|\alpha+\beta|+1} \alpha!^{\mu} \beta!^{\mu} \langle \xi \rangle^{2(1-\sigma) - \alpha}_{h}, \\
	C_{\tilde{\Lambda}}^{|\alpha+\beta|+1} \alpha!^{\mu} \beta!^{\mu} \langle \xi \rangle^{- \alpha}_{h}, \,\, \text{if} \,\, \beta > 0,
\end{cases}
$$
where $C_{\tilde{\Lambda}}$ is a constant depending only on $M_2, M_1, C_{w}, C_{\psi}, \mu, \sigma$. Moreover, since we are assuming $2(1-\sigma) < \frac{1}{\theta}$ we also get
\begin{align*}
	|\tilde{\Lambda}(x,\xi)| &\leq C_{\tilde{\Lambda}} \langle \xi \rangle^{2(1-\sigma)}_{h} = C_{\tilde{\Lambda}} \langle \xi \rangle^{2(1-\sigma) - \frac{1}{\theta}}_{h} \langle \xi \rangle^{\frac{1}{\theta}}_{h} \leq h^{\frac{1}{\theta} - 2(1-\sigma)} C_{\tilde{\Lambda}} \langle \xi \rangle^{\frac{1}{\theta}}_{h},
\end{align*}
therefore
$$
\sup_{x,\xi \in \R^{n}} |\tilde{\Lambda}(x,\xi)| \langle \xi \rangle^{-\frac{1}{\theta}}_{h} \leq h^{\frac{1}{\theta} - 2(1-\sigma)}C_{\tilde{\Lambda}},
$$
which can be assumed as small as we want, provided that $h > h_0(M_2,M_1, C_{\tilde{\Lambda}}, \theta, \sigma)$. Hence we may use Theorem \ref{conjthmnew} to compute $e^{\tilde{\Lambda}}(x,D) \circ (iP) \circ \{e^{\tilde{\Lambda}}(x,D)\}^{-1}$. First we note that $e^{\tilde{\Lambda}(x,D)} \circ \partial_t  \circ \{e^{\tilde{\Lambda}}(x,D)\}^{-1} =\partial_t,$ because $\tilde{\Lambda}(x,\xi)= \lambda_1(x,\xi)+\lambda_2(x,\xi)$ is independent of $t$.\\

	\begin{itemize} \item Conjugation of $ia_3(t,D)$: Since $a_3$ does no depend on $x$, the asymptotic expansion of Theorem \ref{conjthmnew} reduces to (omitting $(t,x,D)$ in the notation)
	$$
	e^{\tilde{\Lambda}}\circ a_3 \circ \, ^{R}\{e^{-\tilde{\Lambda}}\} = a_3+\textrm{op}\left( \sum_{1 \leq \alpha < N} \frac{1}{\alpha!} \partial^{\alpha}_{\xi} \{ e^{\tilde{\Lambda}} a D^{\alpha}_{x} e^{-\tilde{\Lambda}}\} + r_N\right) + r_\infty,
	$$
and since the $x-$derivatives of $\tilde{\Lambda}$ kill the growth in $\xi$ given by the integrals defining $\tilde\Lambda$, we obtain $r_N$ of order $3 - N$ and $r_\infty\in\mathcal K_\theta$; with this simplification we get
\begin{align*}
	e^{\tilde{\Lambda}} \circ ia_3 \circ\, ^{R}\{e^{-\tilde{\Lambda}}\} = ia_3 + \partial_{\xi}\{ia_3 D_x(-\tilde{\Lambda})\} + \frac{1}{2} \partial^{2}_{\xi} \{ia_3 [D^{2}_{x}(-\tilde{\Lambda}) +(D_x\tilde{\Lambda})^{2}]\} + r_{3} + r_{\infty}
\end{align*}
 with $r_{3}$ of order zero. Composing with the Neumman series we get from \eqref{equation_inverse_of_e_power_tilde_Lambda_in_a_precise_way}:

\begin{align*}
	e^{\tilde{\Lambda}} (ia_3) &\{e^{\tilde{\Lambda}}\}^{-1} 
	= \textrm{op}\left(ia_3 - \partial_{\xi}(a_3 \partial_{x}\tilde{\Lambda}) + \frac{i}{2}\partial^{2}_{\xi}[a_3(\partial^{2}_{x}\tilde{\Lambda} - (\partial_{x}\tilde{\Lambda})^2)] + r_3 + r_{\infty}\right) \\ 
	&\circ \textrm{op}\left( 1 - i\partial_{\xi} \partial_{x} \tilde{\Lambda} - \frac{1}{2}\partial^{2}_{\xi}(\partial^{2}_{x}\tilde{\Lambda} - [\partial_x \tilde{\Lambda}]^{2}) - [\partial_{\xi} \partial_{x} \tilde{\Lambda} ]^{2} + q_{-3} \right) \\
	&= ia_3 - \partial_{\xi}(a_3 \partial_{x}\tilde{\Lambda}) + \frac{i}{2}\partial^{2}_{\xi}\{a_3(\partial^{2}_{x}\tilde{\Lambda} - \{\partial_{x}\tilde{\Lambda}\}^2)\}
	+ a_3\partial_{\xi}\partial_{x}\tilde{\Lambda} - i\partial_{\xi}a_3\partial_{\xi}\partial^{2}_{x}\tilde{\Lambda} 
	\\
	&+ i\partial_{\xi}(a_3 \partial_{x}\tilde{\Lambda})\partial_{\xi}\partial_{x}\tilde{\Lambda} - \frac{i}{2}a_3\{\partial^{2}_{\xi}(\partial^{2}_{x}\tilde{\Lambda} + [\partial_x \tilde{\Lambda}]^{2}) + 2[\partial_{\xi} \partial_{x} \tilde{\Lambda} ]^{2}\} + r_0 + r \\
	&= ia_3 - \partial_{\xi}a_3 \partial_{x}\tilde{\Lambda} + \frac{i}{2}\partial^{2}_{\xi}\{a_3[\partial^{2}_{x}\tilde{\Lambda} - 
	(\partial_{x}\tilde{\Lambda})^2]\}
	- i\partial_{\xi}a_3\partial_{\xi}\partial^{2}_{x}\tilde{\Lambda} 
	\\
	&+ i\partial_{\xi}(a_3 \partial_{x}\tilde{\Lambda})\partial_{\xi}\partial_{x}\tilde{\Lambda} - \frac{i}{2}a_3\{\partial^{2}_{\xi}(\partial^{2}_{x}\tilde{\Lambda} + [\partial_x \tilde{\Lambda}]^{2}) + 2(\partial_{\xi} \partial_{x} \tilde{\Lambda} )^{2}\} + r_0 + r, \\
\end{align*}
where $r_0 \in C([0,T]; S^{0}_{\mu'}(\R^{2}))$ and $r$ is a new regularizing term. Writing $\tilde{\Lambda} = \lambda_2 + \lambda_1$ and observing that $D_x \lambda_1$ has order $-1$ we get
\begin{align*}
	e^{\tilde{\Lambda}} (ia_3) \{e^{\tilde{\Lambda}}\}^{-1} &= ia_3 - \partial_{\xi}a_3 \partial_{x}\lambda_2 - \partial_{\xi}a_3\partial_{x}\lambda_1 + \frac{i}{2}\partial^{2}_{\xi}\{a_3(\partial^{2}_{x}\lambda_2 - \{\partial_{x}\lambda_2\}^2)\}
	- i\partial_{\xi}a_3\partial_{\xi}\partial^{2}_{x}\lambda_2 
	\\
	&+ i\partial_{\xi}(a_3 \partial_{x}\lambda_2)\partial_{\xi}\partial_{x}\lambda_2 - \frac{i}{2}a_3\{\partial^{2}_{\xi}(\partial^{2}_{x}\lambda_2 + [\partial_x \lambda_2]^{2}) + 2[\partial_{\xi} \partial_{x} \lambda_2 ]^{2}\} + r_0 + r,
\end{align*}
for a new zero order term $r_0$. Setting 
\begin{align*}
	d_1(t,x,\xi) &= \frac{1}{2}\partial^{2}_{\xi}\{a_3(\partial^{2}_{x}\lambda_2 - \{\partial_{x}\lambda_2\}^2)\}
	- \partial_{\xi}a_3\partial_{\xi}\partial^{2}_{x}\lambda_2 \\
	&+ \partial_{\xi}(a_3 \partial_{x}\lambda_2)\partial_{\xi}\partial_{x}\lambda_2 - \frac{1}{2}a_3\{\partial^{2}_{\xi}(\partial^{2}_{x}\lambda_2 + [\partial_x \lambda_2]^{2}) + 2[\partial_{\xi} \partial_{x} \lambda_2 ]^{2}\}
\end{align*}
we may write
\begin{align*}
	e^{\tilde{\Lambda}} (ia_3) \{e^{\tilde{\Lambda}}\}^{-1} &= ia_3 - \partial_{\xi}a_3 \partial_{x}\lambda_2 - \partial_{\xi}a_3\partial_{x}\lambda_1 + id_1 + r_0 + r,
\end{align*}
where $d_1$ is a real valued symbol of order $1$ which does not depend on $\lambda_1$. Moreover, we have the following estimate
$$
|\partial^{\alpha}_{\xi} \partial^{\beta}_{x} d_1(t,x,\xi)| \leq C_{\lambda_2}^{\alpha+\beta+1} \alpha!^{\mu} \beta!^{\mu} \langle \xi \rangle^{1 - \alpha}\langle x \rangle^{-\sigma};
$$
where $C_{\lambda_2}$ is a constant dependent of $\lambda_1$.

\item Conjugation of $ia_2(t,x,D)$: for $N \in \N$ such that $2-N(1-\frac{1}{\theta}) \leq 0$, Theorem \ref{conjthmnew}  gives 
\begin{align*}
	e^{\tilde{\Lambda}} \circ ia_2(t,x,D) \circ\, ^{R}\{e^{-\tilde{\Lambda}}\} = ia_2(t,x,D) &+ 
	\text{op}\underbrace{ \left( \sum_{1 \leq \alpha + \beta < N} \frac{1}{\alpha! \beta!} \partial^{\alpha}_{\xi} \{ \partial^{\beta}_{\xi} e^{\tilde{\Lambda}} D^{\beta}_{x} (ia_2) D^{\alpha}_{x} e^{-\tilde{\Lambda}}\} \right)}_{=:(ia_2)_{N}}  \\
	&+ \tilde{r}_0(t,x,D) + \tilde{r}(t,x,D),	
\end{align*}
where $\tilde{r}_0$ has order zero and $\tilde r\in\mathcal K_\theta$. By the hypothesis on $a_2$, we obtain
$$
|\partial^{\alpha}_{\xi} \partial^{\beta}_{x} (ia_2)_{N}(t,x,\xi)| \leq C_{a_2, \tilde{\Lambda}}^{\alpha+\beta+1} \alpha!^{\mu} \beta!^{s_0} \langle \xi \rangle^{2 - [2\sigma-1] - \alpha} \langle x \rangle^{-\sigma}.
$$
Composing with the Neumann series and using the fact that $\partial_x \lambda_1$ has order $-1$ we get
\begin{align*}
	e^{\tilde{\Lambda}} \circ ia_2 \circ \{e^{\tilde{\Lambda}}\}^{-1} &= (ia_2 + (ia_2)_{N} + \tilde{r}_0 + \tilde{R})\circ(I - i\partial_\xi\partial_x \lambda_2 + q_{-2}) \\
	&= ia_2 + (ia_2)_{N} + a_2 \circ \partial_\xi\partial_x\lambda_2 - i(ia_2)_{N} \circ\partial_\xi\partial_x \lambda_2 + r_0 + r \\
	&= ia_2 + \underbrace{(ia_2)_{N} - i(ia_2)_{N} \partial_\xi\partial_x \lambda_2}_{=: (ia_2)_{\tilde{\Lambda}} } + a_2 \partial_\xi\partial_x\lambda_2 + r_0 + r,
\end{align*}
where $r_0$ has order zero, $r\in\mathcal K_\theta$ and $(a_2)_{\tilde{\Lambda}}$ satisfies 
$$
|\partial^{\alpha}_{\xi} \partial^{\beta}_{x} (ia_2)_{\tilde{\Lambda}}(t,x,\xi)| \leq C_{a_2, \tilde{\Lambda}}^{\alpha+\beta+1} \alpha!^{\mu} \beta!^{s_0} \langle \xi \rangle^{2 - [2\sigma-1] - \alpha} \langle x \rangle^{-\sigma},
$$
in particular
\begin{equation}\label{equation_remainder_of_conj_of_a_2_by_the_reverse}
	|(ia_2)_{\tilde{\Lambda}}(t,x,\xi)| \leq C_{a_2, \tilde{\Lambda}} \langle \xi \rangle^{2 - [2\sigma-1]}_{h} \langle x \rangle^{-\sigma}.
\end{equation}

\item Conjugation of $ia_1(t,x,D)$:
\begin{align*}
	e^{\tilde{\Lambda}} \circ (ia_1)(t,x,D) \circ \{e^{\tilde{\Lambda}}\}^{-1} &= (ia_1 + (ia_1)_{\tilde{\Lambda}} + r_1) (t,x,D) \sum_{j\geq 0}(-r)^{j} \\
	&= ia_1(t,x,D) + (ia_1)_{\tilde{\Lambda}}(t,x,D) + r_{0}(t,x,D) + r(t,x,D), 
\end{align*}
where $r_{0}$ has order zero, $r\in\mathcal K_\theta$ and 
\begin{equation}\label{equation_remainder_of_conj_of_a_1_by_the_reverse}
	(ia_{1})_{\tilde{\Lambda}} \sim \sum_{|\alpha+\beta| \geq 1} \frac{1}{\alpha!\beta!} \partial^{\alpha}_{\xi}\{\partial^{\beta}_{\xi}e^{\tilde{\Lambda}} D^{\beta}_{x}(ia_1) D^{\alpha}_{x}e^{-\tilde{\Lambda}}\} \,\, \text{in} \,\, S^{2(1-\sigma)}_{\mu, s_0}.
\end{equation}

\item Conjugation of $ia_0(t,x,D)$: $e^{\tilde{\Lambda}} \circ (ia_0)(t,x,D) \circ \{e^{\tilde{\Lambda}}\}^{-1} = r_0(t,x,D) + r(t,x,D),$ where $r_0$ has order zero and $r$ is a $\theta-$regularizing term.

\end{itemize}

Gathering all the previous computations we may write (omitting $(t,x,D)$ in the notation)
\begin{align*}
e^{\tilde{\Lambda}} &(iP) \{e^{\tilde{\Lambda}}\}^{-1} = \partial_{t} + ia_3 - \partial_{\xi}a_3\partial_x\lambda_2 - \partial_{\xi}a_3 \partial_{x}\lambda_1 + id_{1} \\
&+ ia_2 + (ia_2)_{\tilde{\Lambda}} + a_2\partial_{\xi}\partial_{x}\lambda_2 + ia_1 + (ia_1)_{\tilde{\Lambda}} + r_0 + r,
\end{align*}
where $d_{1} \in S^{1}_{1,s_0}$, $d_{1}$ is real-valued, $d_{1}$ does not depend on $\lambda_1$, $(ia_2)_{\tilde{\Lambda}}$ satisfies \eqref{equation_remainder_of_conj_of_a_2_by_the_reverse}, $(ia_1)_{\tilde{\Lambda}}$ satisfies \eqref{equation_remainder_of_conj_of_a_1_by_the_reverse}, $r_0 \in C([0,T]; S^{0}_{\mu', s_0}(\R^2))$ and $r\in\mathcal K_\theta$.

\subsection{Conjugation of $e^{\tilde{\Lambda}} (iP) \{e^{\tilde{\Lambda}}\}^{-1}$ by $e^{k(t)\langle D \rangle^{\frac{1}{\theta}}_{h}}$}

Let us recall that the function $k(t)$ satisfies $k \in C^{1}([0,T]; \R)$, $k'(t)\leq 0$ and $k(t) > 0$ for every $t\in [0,T]$. In order to apply Theorem \ref{conjthmnew} with $\lambda(t,x,\xi)= k(t) \langle \xi \rangle_h^{\frac1{\theta}}$, we observe that this function satisfies \eqref{equation_stronger_hypothesis_on_Lambda} and \eqref{firstasslambda} with $\rho_0=k(0)$ and for some positive $A$. Hence, there exists $\tilde{\delta}>0$ such that if $k(0) <\tilde{\delta} A^{-1/\theta},$ then Theorem \ref{conjthmnew} applies. Moreover, since in this case $\lambda$ does not depend on $x$, the asymptotic expansion of Theorem \ref{conjthmnew} simplifies into
\begin{multline}\label{z}
	e^{k(t)\langle D \rangle_h^{1/\theta}} p(x,D) e^{-k(t)\langle D \rangle_h^{1/\theta}} = p(x,D)+ \textrm{op} \left( \sum_{1 \leq |\beta| < N} \frac{1}{\beta!} \partial^{\beta}_{\xi} e^{k(t)\langle \xi \rangle_h^{1/\theta}} D^{\beta}_{x}p(x,\xi) e^{-k(t)\langle \xi \rangle_h^{1/\theta}}  \right)  \\ + r_{N}(x,D) + r_{\infty}(x,D),
	\end{multline}
where we can say that $r_{N}+ r_{\infty} \in \tilde S_\theta^{m-(1-\frac{1}{\theta})N}$.

\begin{itemize}
\item Conjugation of $ \partial_t$: $e^{k(t)\langle D \rangle^{\frac{1}{\theta}}_{h}  } \, \partial_{t} \, e^{-k(t)\langle D \rangle^{\frac{1}{\theta}}_{h}} = \partial_{t} - k'(t)\langle D \rangle^{\frac{1}{\theta}}_{h}$.

\item Conjugation of $ia_3(t,D)$: since $a_3$ does not depend on $x$, we simply have 
$$
e^{k(t)\langle D \rangle^{\frac{1}{\theta}}_{h}} \circ ia_3 (t,D) \circ e^{-k(t)\langle D \rangle^{\frac{1}{\theta}}_{h}} = ia_3(t,D).
$$

\item Conjugation of $\text{op}\{ia_2 - \partial_{\xi}a_3\partial_{x}\lambda_2\}$:
\begin{align*}
	e^{k(t)\langle D \rangle^{\frac{1}{\theta}}_{h}} \circ (ia_2 - \partial_{\xi}a_3\partial_{x}\lambda_2 ) &(t,x,D) \circ e^{-k(t)\langle D \rangle^{\frac{1}{\theta}}_{h}} = 
	ia_2(t,x,D) \\ 
	&- \text{op}(\partial_{\xi}a_3\partial_{x}\lambda_2) + (b_{2,k} + r_0 )(t,x,D)
\end{align*}
where $r_0$ has order zero %$r$ is a $\theta-$regularizing term 
and $b_{2,k} (t,x,\xi) \in C([0,T]; S^{1+\frac{1}{\theta}}_{\mu,s_0}(\R^{2}))$,
\begin{equation}\label{equation_remainde_conj_level_2_by_k_D}
	|b_{2,k}(t,x,\xi)| \leq \max\{1,k(t)\}C_{s,\lambda_2} \langle \xi \rangle^{1+\frac{1}{\theta}}_{h} \langle x \rangle^{-\sigma}, \quad x, \xi \in \R^{n}.
\end{equation}

\item Conjugation of $(ia_2)_{\tilde{\Lambda}} (t,x,D)$: 
$$
e^{k(t)\langle D \rangle^{\frac{1}{\theta}}_{h}} \circ (ia_2)_{\tilde{\Lambda}}(t,x,D) \circ e^{-k(t)\langle D \rangle^{\frac{1}{\theta}}_{h}} = \{(ia_2)_{k, \tilde{\Lambda}} +r_0\}(t,x,D),
$$
where $r_0$ has order zero %$r$ is a $\theta-$regularizing term 
and $(ia_2)_{k, \tilde{\Lambda}} \in C([0,T]; S^{2-(2\sigma-1)}_{\mu, s_0})$, 
$$
|\partial^{\alpha}_{\xi} \partial^{\beta}_{x} (ia_2)_{k,\tilde{\Lambda}}(t, x,\xi)| \leq (\max\{k(t),1\}C_{a_2, \tilde{\Lambda}})^{\alpha+\beta+1} \alpha!^{\mu} \beta!^{s_0} \langle \xi \rangle^{2 - (2\sigma-1)- \alpha} \langle x \rangle^{-\sigma}.
$$
In particular
\begin{equation}\label{equation_remainder_conj_ia_2_Lambda_tilde_by_k_D}
	|(ia_2)_{k,\tilde{\Lambda}}(t, x,\xi)| \leq \max\{k(t),1\}C_{a_2, \tilde{\Lambda}} \langle \xi \rangle^{2 - (2\sigma - 1)}_{h} \langle x \rangle^{-\sigma}.
\end{equation}

\item Conjugation of $\text{op}(ia_1  - \partial_{\xi}a_3 \partial_{x}\lambda_1 + id_{1} + a_2\partial_{\xi}\partial_{x}\lambda_2)$: we have (omitting $(t,x,D)$ in the notation)
\begin{align*}
	e^{k(t)\langle D \rangle^{\frac{1}{\theta}}_{h}} &\circ \text{op}(ia_1  - \partial_{\xi}a_3 \partial_{x}\lambda_1 + id_{1} + a_2\partial_{\xi}\partial_{x}\lambda_2) \circ e^{-k(t)\langle D \rangle^{\frac{1}{\theta}}_{h}}  \\ 
	&= ia_1  - \partial_{\xi}a_3 \partial_{x}\lambda_1 + id_{1} + a_2\partial_{\xi}\partial_{x}\lambda_2+ b_{1,k} + r_0 ,
\end{align*}
where $r_0$  has order zero, $b_{1,k}(t,x,\xi) \in C([0,T]; S^{\frac{1}{\theta}}_{\mu,s_0})$ and for large $h$ we have
\begin{equation}\label{equation_remainder_conj_level_1_Lambda_tilde_by_k_D}
	|b_{1,k}(t,x,\xi)| \leq k(t)C_{\tilde{\Lambda}} \langle \xi \rangle^{\frac{1}{\theta}}_{h}, \quad x \in \R, \,\xi \in \R.
\end{equation}

\item Conjugation of $(ia_1)_{\tilde{\Lambda}}(t,x,D)$: 
$$
e^{k(t)\langle D \rangle^{\frac{1}{\theta}}_{h}} \circ (ia_1)_{\tilde{\Lambda}}(t,x,D) \circ e^{-k(t)\langle D \rangle^{\frac{1}{\theta}}_{h}} = \{(ia_1)_{k, \tilde{\Lambda}} + r_0 \}(t,x, D),
$$
where $r_0$ has order zero, $(ia_1)_{k, \tilde{\Lambda}} \in C ([0,T]; S^{2(1-\sigma)}_{\mu, s_0})$ and for large $h$ we have 
\begin{equation}\label{equation_remainder_conj_level_less_1_Lambda_tilde_by_k_D}
	|(ia_1)_{k,\tilde{\Lambda}}(t, x,\xi)| \leq C_{\tilde{\Lambda}} \langle \xi \rangle^{2(1-\sigma)}_{h}, \quad x, \xi \in \R.
\end{equation}

\end{itemize}

Finally, gathering all the previous computations we obtain the following expression for the conjugated opeartor (assuming the parameter $h$ sufficiently large)
\begin{align*}
e^{\Lambda} \circ (iP) \circ \{e^{\Lambda}\}^{-1} &= \partial_{t} + ia_3(t,D) \\
&+ \text{op}(ia_2  - \partial_{\xi}a_3\partial_x\lambda_2 + b_{2,k} + (ia_2)_{k,\tilde{\Lambda}}) \\
&+ \text{op}(ia_1 - \partial_{\xi}a_3 \partial_{x}\lambda_1 + id_{1}  + a_2\partial_{\xi}\partial_{x}\lambda_2) \\
&+ \text{op}(- k'(t)\langle \xi \rangle^{\frac{1}{\theta}}_{h} + b_{1,k}  + (ia_1)_{k, \tilde{\Lambda}}) + r_0(t,x,D)
\end{align*}
where $b_{2,k}$ satisfies \eqref{equation_remainde_conj_level_2_by_k_D}, $(ia_2)_{k, \tilde{\Lambda}}$ satisfies \eqref{equation_remainder_conj_ia_2_Lambda_tilde_by_k_D}, $b_{1,k}$ satisfies \eqref{equation_remainder_conj_level_1_Lambda_tilde_by_k_D}, $(ia_1)_{k, \tilde{\Lambda}}$ satisfies \eqref{equation_remainder_conj_level_less_1_Lambda_tilde_by_k_D} and $r_0$ has order zero.

%%%%%%%%%%%%%%%%%%%%%%%%%%%%%%%%%%%%%%%%%%%%%%%%%%%%%%%%%%%%%%%%%%%%%%%%%%%%%%%%%%%%%%%%%%%%%%%%%%%%%%%%%%%%%%%%%%%%%%%%%%%%%%%%%%%%%%%%%%%%%%%%%%%%%%%%%%%%%%%%%%%%%%%%%%%%%%%%%%%%%%%%%%%%%%%%%%%%%%%%%%%%%%%%%%%%%%%%%%%%%%%%%%%%%%%%%%%%%%%%%%%%%%%%%%%%%%%%%%%%%%%%%%%%%%%%%%%%%%%%%%

\subsection{Lower bound estimates for the real parts}\label{section_estimate_for_the_real_parts}
In this subsection, we will derive some estimates from below for the real parts of the lower order terms of $(iP)_\Lambda$ and we use them to achieve a well-posedness result for the Cauchy problem \eqref{equation_conjugated_cauchy_problem}. We start noticing that for $|\xi| > hR_{a_3}$ we have
\begin{align*}
	- \partial_{\xi}a_3\partial_x\lambda_2 &= |\partial_{\xi}a_3| M_2 \langle x \rangle^{-\sigma}  
	\psi\left(\frac{\langle x \rangle}{\langle \xi \rangle^{2}_{h}}\right) \\
	&= |\partial_{\xi}a_3| M_2 \langle x \rangle^{-\sigma}  - |\partial_{\xi}a_3| M_2 \langle x \rangle^{-\sigma} \left[  1 - \psi\left(\frac{\langle x \rangle}{\langle \xi \rangle^{2}_{h}}\right) \right],
\end{align*}
\begin{align*}
	- \partial_{\xi}a_3\partial_x\lambda_1 &= |\partial_{\xi}a_3| M_1 \langle \xi \rangle^{-1}_{h} \langle x \rangle^{-\frac{\sigma}{2}}  
	\psi\left(\frac{\langle x \rangle}{\langle \xi \rangle^{2}_{h}}\right) \\ 
	&= |\partial_{\xi}a_3| M_1 \langle \xi \rangle^{-1}_{h} \langle x \rangle^{-\frac{\sigma}{2}}  - |\partial_{\xi}a_3| M_1 \langle \xi \rangle^{-1}_{h} \langle x \rangle^{-\frac{\sigma}{2}} \left[  1 - \psi\left(\frac{\langle x \rangle}{\langle \xi \rangle^{2}_{h}}\right) \right].
\end{align*}
We also observe that
$$
- |\partial_{\xi}a_3| M_2 \langle x \rangle^{-\sigma} \left[  1 - \psi\left(\frac{\langle x \rangle}{\langle \xi \rangle^{2}_{h}}\right) \right] \geq - 2^{\sigma}C_{a_3} M_2 \langle \xi \rangle^{2(1-\sigma)}_{h}, 
$$
$$
- |\partial_{\xi}a_3| M_1 \langle \xi \rangle^{-1}_{h} \langle x \rangle^{-\frac{\sigma}{2}} \left[  1 - \psi\left(\frac{\langle x \rangle}{\langle \xi \rangle^{2}_{h}}\right) \right] \geq - 2^{\sigma}C_{a_3}M_1 \langle \xi \rangle^{1-\sigma}_{h},
$$
because $\langle x \rangle \geq \frac12 \langle \xi \rangle^{2}_{h}$ on the support of $(1-\psi)(\langle x \rangle \langle \xi \rangle^{-2}_{h})$. 

In this way we may write 
$$
e^{\Lambda} \circ (iP)  \circ \{e^{\Lambda}\}^{-1} = \partial_{t} + ia_3(t,D) + \tilde{a}_2(t,x,D) + \tilde{a}_1(t,x,D) + \tilde{a}_{\theta}(t,x,D) + r_0(t,x,D),
$$
where $r_0$ is an operator of order $0$ and 
\beqsn
Re\, \tilde{a}_2 &=& -Im\, a_2 + |\partial_{\xi}a_3| M_2 \langle x \rangle^{-\sigma} + Re\, b_{2,k} + Re\, (ia_2)_{k,\tilde{\Lambda}},
\\
Im\, \tilde{a}_2 &=& Re \, a_2 + Im\, b_{2,k} + Im\, (a_2)_{k, \tilde{\Lambda}},
\\
Re\, \tilde{a}_1 &=& -Im\, a_1 + |\partial_{\xi}a_3| M_1 \langle \xi \rangle^{-1}_{h} \langle x \rangle^{-\frac{\sigma}{2}}  + Re\, a_2\partial_{\xi}\partial_{x}\lambda_2,
\\
Re\, \tilde{a}_\theta &=& - k'(t)\langle \xi \rangle^{\frac{1}{\theta}}_{h} + Re\, b_{1,k}  + Re\, (ia_1)_{k, \tilde{\Lambda}} \\  
&-& |\partial_{\xi}a_3| M_2 \langle x \rangle^{-\sigma} \left[  1 - \psi\left(\frac{\langle x \rangle}{\langle \xi \rangle^{2}_{h}}\right) \right] 
- |\partial_{\xi}a_3| M_1 \langle \xi \rangle^{-1}_{h} \langle x \rangle^{-\frac{\sigma}{2}} \left[  1 - \psi\left(\frac{\langle x \rangle}{\langle \xi \rangle^{2}_{h}}\right) \right].
\eeqsn

Now we decompose $iIm\, \tilde{a}_2$ into its Hermitian and anti-Hermitian part:
$$
i \Im \tilde a_2=\ds\frac{i \Im \tilde a_2+(i \Im \tilde a_2)^*}{2}+\frac{i \Im \tilde a_2-(i \Im \tilde a_2)^*}{2}= H_{Im\,\tilde{a}_2} + A_{Im\,\tilde{a}_2};
$$
we have that $2Re\, \langle A_{Im\,\tilde{a}_2} u, u \rangle = 0$, while $H_{Im\,\tilde{a}_2}$ has symbol 
$$
\sum_{\alpha\geq 1}\frac{i}{2\alpha!}\partial_\xi^\alpha D_x^\alpha \Im \tilde a_2(t,x,\xi) = 
\underbrace{\sum_{\alpha\geq 1}\frac{i}{2\alpha!}\partial_\xi^\alpha D_x^\alpha Re\, a_2}_{=:c(t,x,\xi)} + 
\underbrace{\sum_{\alpha\geq 1}\frac{i}{2\alpha!}\partial_\xi^\alpha D_x^\alpha \{Im\, b_{2,k} + Im\, (a_2)_{k, \tilde{\Lambda}}\}}_{=:e(t,x,\xi)}.
$$
The hypothesis on $a_2$ implies
$$
|c(t,x,\xi)| \leq C_{c} \langle \xi \rangle \langle x \rangle^{-\sigma},
$$
with $C_c$ depending only on $Re a_2$, whereas from \eqref{equation_remainde_conj_level_2_by_k_D}, \eqref{equation_remainder_conj_ia_2_Lambda_tilde_by_k_D} and using the fact that $2(1-\sigma) \leq \frac{1}{\theta}$ we obtain 
$$
|e(t,x,\xi)| \leq C_{e, k, \tilde{\Lambda}} \langle \xi \rangle^{\frac{1}{\theta}} \langle x \rangle^{-\sigma},
$$
with $C_{e, k. \tilde{\Lambda}}$ depending on $k(t)$ and $\tilde{\Lambda}$.

We are ready to obtain the desired estimates from below. Using the above decomposition we get
\begin{align*}
	e^{\Lambda} \circ (iP)  \circ \{e^{\Lambda}\}^{-1} = \partial_{t} &+ ia_3(t,D) + Re\,\tilde{a}_2(t,x,D) + A_{Im\,\tilde{a}_2}(t,x,D) \\ 
	&+ (\tilde{a}_1+c+e)(t,x,D) + \tilde{a}_{\theta}(t,x,D) + r_0(t,x,D).
\end{align*}
Note that $\langle \xi \rangle^{2}_{h} \leq 2 \xi^{2}$ provided that $|\xi| > R_{a_3}h$. Estimating the terms of order $2$ we get 
\begin{align} \label{lbestA_2}
	Re\, \tilde{a}_2 &\geq M_2 \frac{C_{a_3}}{2} \langle \xi \rangle^{2}_{h} \langle x \rangle^{-\sigma} - C_{a_2} \langle \xi \rangle^{2}_{h} \langle x \rangle^{-\sigma}  \langle x \rangle^{-\sigma}  \\
	&- \max\{1,k(t)\}C_{ \lambda_2} \langle \xi \rangle^{1+\frac{1}{\theta}}_{h} - \max\{1, k(t)\}C_{\tilde{\Lambda}} \langle \xi \rangle^{2-(2\sigma-1)}_{h} \langle x \rangle^{-\sigma}  \nonumber \\
	&\geq \left(M_2 \frac{C_{a_3}}{2} - C_{a_2} - \max\{1,k(t)\}C_{\lambda_2} h^{-(1-\frac{1}{\theta})}- \max\{1, k(t)\}C_{\tilde{\Lambda}}h^{-(2\sigma-1)}\right)\langle \xi \rangle^{2}_{h} \langle x \rangle^{-\sigma}, \nonumber
\end{align} 
For the terms of order $1$ we obtain
\begin{align} \label{lbestA_1}
	Re\, (\tilde{a}_1+c+e) &\geq 	M_1 \frac{C_{a_3}}{2} \langle \xi \rangle_{h} \langle x \rangle^{-\frac{\sigma}{2}} - C_{a_1} \langle \xi \rangle_{h} \langle x \rangle^{-\frac{\sigma}{2}} - C_{a_2, \lambda_2} \langle \xi \rangle_{h} \langle x \rangle^{-2\sigma} \\ 
	&- C_c \langle \xi \rangle_{h} \langle x \rangle^{-\sigma} - C_{e,k,\tilde{\Lambda}}\langle \xi \rangle^{\frac{1}{\theta}}_{h} \langle x\rangle^{-\sigma} \nonumber \\
	&\geq \left(M_1 \frac{C_{a_3}}{2} - C_{a_1} - C_{a_2, \lambda_2} - C_{c} - C_{e,k,\tilde{\Lambda}}h^{-(1-\frac{1}{\theta})}\right) \langle \xi \rangle_{h} \langle x \rangle^{-\frac{\sigma}{2}}.  \nonumber
\end{align}
Finally, for the terms of order $\frac{1}{\theta}$ we have
\begin{align} \label{lbestA_theta}
	Re\, \tilde{a}_{\theta} &\geq - k'(t)\langle \xi \rangle^{\frac{1}{\theta}}_{h}  - k(t) C_{\tilde{\Lambda}} \langle \xi \rangle^{\frac{1}{\theta}}_{h} 
	- C_{\tilde{\Lambda}} \langle \xi \rangle^{2(1-\sigma)}_{h} - 2^{\sigma}C_{a_3} M_2 \langle \xi \rangle^{2(1-\sigma)}_{h}  - 2^{\sigma}C_{a_3}M_1 \langle \xi \rangle^{1-\sigma}_{h} \nonumber \\
	&\geq - (k'(t) + C_1k(t) + C_2 ) \langle \xi \rangle^{\frac{1}{\theta}}_{h}, \nonumber
\end{align}
where $C_2 = \tilde{C}_{2}h^{-[\frac{1}{\theta}-2(1-\sigma)]}$ and $C_1, \tilde{C}_2$ depend on $\tilde{\Lambda}$ but not on  $h$. Setting 
$$
k(t) = e^{-C_1 t}k(0) - \frac{1-e^{-C_1 t}}{C_1} C_2, \quad t \in [0,T],
$$
we obtain $k'(t) \leq 0$ and $k'(t) + C_1k(t) + C_2 = 0$. Note that for any choice of $k(0) > 0$, we can choose $h$ large enough in order to obtain $k(t) > 0$ on $[0,T]$.

From the previous estimates from below we obtain the following proposition.   

\begin{proposition}\label{proposition_Hm_well_posedness_for_the_conjugated_problem}
	Let $k(0)$ be small enough such that \eqref{z} holds true. Then
	there exist $M_2 , M_1>0$ and a large parameter $h_{0}=h_0(k(0),M_2,M_1,T, \theta,\sigma)$ $> 0$ such that for every $h \geq h_0$ the Cauchy problem \eqref{equation_conjugated_cauchy_problem} is well-posed in Sobolev spaces $H^{m}(\R)$. More precisely, for any $\tilde f \in C([0,T];H^{m}(\R))$ and $\tilde g \in H^{m}(\R)$, there exists a unique solution $v \in C([0,T];H^{m}(\R)) \cap C^{1}([0,T]; H^{m-3}(\R))$ such that the following energy estimate holds
	$$
	\|v(t)\|^{2}_{H^{m}} \leq C \left( \| \tilde g \|^{2}_{H^{m}}  + \int_{0}^{t} \| \tilde f(\tau)\|^{2}_{H^{m}} d\tau \right), \quad t \in [0,T].
	$$
\end{proposition}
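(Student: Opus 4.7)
The plan is to derive an $L^2$ energy estimate for $(iP)_\Lambda$ by applying Fefferman-Phong and sharp G{\aa}rding inequalities to the nonnegative real parts of its lower order symbols, then lift to $H^m$ by conjugating with $\langle D\rangle^m$, and finally invoke a standard regularization argument to produce the solution.

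First I would fix the free parameters in the hierarchy $M_2\to M_1\to k(0)\to h$. Take $M_2$ large enough (depending on $C_{a_2}/C_{a_3}$) so that the bracket in \eqref{lbestA_2} has positive leading coefficient, then $M_1$ large enough (depending on $C_{a_1}$, $C_{a_2,\lambda_2}$, $C_c$, $C_{a_3}$) to do the same in \eqref{lbestA_1}, and keep $k(t)$ equal to the explicit solution of $k'+C_1k+C_2=0$ constructed just before the proposition, which renders the bound in \eqref{lbestA_theta} nonnegative. Pick $k(0)$ small enough for Theorem \ref{conjthmnew} to apply, and finally pick $h\geq h_0$ large enough that the residual factors $\max\{1,k(0)\}\,C\,h^{-\eta}$, $\eta>0$, appearing in \eqref{lbestA_2}--\eqref{lbestA_theta} are absorbed by the leading positive terms and that $C_2=\tilde C_2 h^{-[1/\theta-2(1-\sigma)]}$ is small enough to keep $k(t)>0$ throughout $[0,T]$. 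With these choices, $\Re\tilde a_2$, $\Re(\tilde a_1+c+e)$ and $\Re\tilde a_\theta$ are pointwise nonnegative on $\{|\xi|>R_{a_3}h\}\times\R$; the discrepancy in the complementary region is a symbol of order zero that can be absorbed into $r_0$.

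Next I would derive the $L^2$ bound for $m=0$. For a smooth candidate $v$ solving \eqref{equation_conjugated_cauchy_problem},
\begin{equation*}
\frac{d}{dt}\|v(t)\|_{L^2}^2 = 2\Re\langle (iP)_\Lambda v,v\rangle - 2\Re\bigl\langle[ia_3(t,D)+\tilde a_2+A_{\Im\tilde a_2}+(\tilde a_1+c+e)+\tilde a_\theta+r_0]v,v\bigr\rangle.
\end{equation*}
The term $ia_3(t,D)$ is skew-adjoint since $a_3$ is real and $x$-independent, and $A_{\Im\tilde a_2}$ is anti-Hermitian by construction, so both contributions drop. Fefferman-Phong applied to $\Re\tilde a_2$ (order $2$) together with sharp G{\aa}rding applied to $\Re(\tilde a_1+c+e)$ and $\Re\tilde a_\theta$ (orders $1$ and $1/\theta$), combined with the trivial $L^2$ bound on $r_0$, then give
\begin{equation*}
\frac{d}{dt}\|v(t)\|_{L^2}^2 \leq C\bigl(\|(iP)_\Lambda v(t)\|_{L^2}^2 + \|v(t)\|_{L^2}^2\bigr),
\end{equation*}
and Gronwall's inequality closes the case $m=0$.

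For arbitrary $m\in\R$, setting $w=\langle D\rangle^m v$, the conjugate $\langle D\rangle^m(iP)_\Lambda\langle D\rangle^{-m}$ differs from $(iP)_\Lambda$ only by pseudodifferential operators of strictly lower order, which can be absorbed into a redefined $r_0$ without disturbing the positivity of the principal pieces; the $L^2$ argument thus transfers verbatim to $w$ and produces the $H^m$ estimate in the proposition. Existence of $v\in C([0,T];H^m)\cap C^1([0,T];H^{m-3})$ follows from a standard parabolic regularization (replace $(iP)_\Lambda$ by $(iP)_\Lambda+\varepsilon\langle D\rangle^2$, solve, and pass $\varepsilon\to 0^+$ using the uniform a priori estimate), while uniqueness is immediate from applying the estimate to the difference of two solutions. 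The main obstacle is the interdependence of $M_2, M_1, k(0)$ and $h$ in the first step: each enlargement of $M_2$ or $M_1$ inflates the constants $C_{\tilde\Lambda}, C_{\lambda_2}, C_{e,k,\tilde\Lambda}$ that multiply the negative $h$-powers, and the consistency of the resulting inequalities relies on freezing these constants in the correct order before finally choosing $h$.
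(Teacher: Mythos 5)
Your proposal is correct and follows essentially the same route as the paper: fix $M_2$ first so that \eqref{firstlbestimate} holds, then $M_1$ for \eqref{secondlbestimate}, then take $h_0$ large to absorb the negative powers of $h$ in \eqref{thirdlbestimate}--\eqref{fourthlbestimate} and to keep $k(t)>0$ on $[0,T]$, apply Fefferman--Phong to $\Re\tilde a_2$ and sharp G{\aa}rding to $\Re(\tilde a_1+c+e)$ and $\Re\tilde a_\theta$, and close with Gronwall. One small slip: in your displayed energy identity the operator should be $\Re\tilde a_2(t,x,D)$, not $\tilde a_2(t,x,D)$; as written you list both $\tilde a_2$ and $A_{\Im\tilde a_2}$, double-counting the imaginary part (the Hermitian part $H_{\Im\tilde a_2}=c+e$ is already carried by the order-one block). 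This is clearly just a typo since the rest of your argument uses $\Re\tilde a_2$ correctly. The extra details you supply (conjugation by $\langle D\rangle^m$ to lift from $L^2$ to $H^m$, absorbing the compact $\xi$-region $\{|\xi|\leq R_{a_3}h\}$ into $r_0$, and a regularization argument for existence) are all things the paper leaves implicit, and your filling them in is welcome; the only caveat is that for a third-order principal part a viscosity term of strictly higher order (e.g.\ $\varepsilon\langle D\rangle^4$) is the safer choice, though this does not affect the correctness of the scheme.
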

\begin{proof}
	Let $k(0) > 0$. Take $M_2 > 0$ such that 
	\begin{equation}\label{firstlbestimate}
		M_2 \frac{C_{a_3}}{2} - C_{a_2} > 0,
	\end{equation}
	and after that set $M_1 > 0$ in such way that
	\begin{equation} \label{secondlbestimate}
		M_1 \frac{C_{a_3}}{2} - C_{a_1} - C_{a_2, \lambda_2} - C_{c} > 0.
	\end{equation}
	Finally, making the parameter $h_{0}$ large enough, we obtain $k(T) > 0$ and
	\begin{equation} \label{thirdlbestimate}
		M_2 \frac{C_{a_3}}{2} - C_{a_2} - \max\{1,k(t)\}C_{\lambda_2} h^{-(1-\frac{1}{\theta})} - \max\{1, k(t)\}C_{\tilde{\Lambda}}h^{-(2\sigma-1)} \geq 0,
	\end{equation}
	\begin{equation}\label{fourthlbestimate}
		M_1 \frac{C_{a_3}}{2} - C_{a_1} - C_{a_2, \lambda_2} - C_{c} - C_{e,k,\tilde{\Lambda}}h^{-(1-\frac{1}{\theta})} > 0.
	\end{equation}
	With these choices $Re\, \tilde{a}_2(t,x,\xi), Re\, (\tilde{a}_1+c+e)(t,x,\xi), Re\, \tilde{a}_{\theta}(t,x,\xi)$ are non negative for large $|\xi|$. Applying the Fefferman-Phong inequality to $\Re \tilde a_2$ we have 
	$$
	Re \langle Re\, \tilde{a}_2(t,x,D) v,v \rangle_{L^{2}} \geq -C \|v \|^2_{L^2}, \quad v \in \mathscr{S}(\R).
	$$ 	
	By the sharp G{\aa}rding inequality we also obtain that
	$$
	Re \langle (\tilde{a}_1+c+e)(t,x,D) v,v \rangle_{L^{2}} \geq -C \|v \|^2_{L^2}, \quad v \in \mathscr{S}(\R)
	$$ 
	and 
	$$
	Re \langle \tilde{a}_\theta(t,x,D) v,v \rangle_{L^{2}} \geq -C \|v \|^2_{L^2}, \quad v \in \mathscr{S}(\R).
	$$ 
	As a consequence we get the energy estimate 
	$$
	\dfrac{d}{dt}\| v(t) \|^{2}_{L^{2}} \leq C' (\| v(t) \|^{2}_{L^{2}} + \| (iP)_{\Lambda} v(t) \|^{2}_{L^{2}}),
	$$ 
	which gives the well-posedness on $L^2(\R)$ and on $H^{m}(\R)$ for every $m \in \R$ for the Cauchy problem \eqref{equation_conjugated_cauchy_problem}.
\end{proof}

%\section{The proof of Theorem \ref{mainthm}} 
\section{Gevrey well-posedness for the Cauchy problem \eqref{Cauchy_problem_in_introduction}}\label{Proofmainresult}

Finally we are ready to prove Theorem $\ref{mainthm}$. 
\begin{proof}[Proof of Theorem \ref{mainthm}] Let us take initial data satisfying $f \in C([0,T], H^{m}_{\rho; \theta}(\R)), g \in H^{m}_{\rho; \theta}(\R)$, for some $m \in \R$ and $\rho > 0$. Now choose $k(0) < \rho$ and $M_2, M_1$ large enough so that Proposition \ref{proposition_Hm_well_posedness_for_the_conjugated_problem} holds true. We have 
$$
e^{\Lambda}(t,x,D)f \in C([0,T]; H^{m}_{\rho-(k(0) + \delta); \theta}(\R)), \quad e^{\Lambda}(0,x,D) g \in H^{m}_{\rho-(k(0) + \delta); \theta}(\R)
$$ 
for every $\delta > 0$, thanks to the continuity properties stated in Proposition \ref{contgev}. Since $k(0) < \rho$ and $k(t)$ is non-increasing, we may conclude $e^{\Lambda}(t,x,D)f$ is in $C([0,T]; H^{m}(\R))$ and $e^{\Lambda}g \in H^{m}(\R)$. Proposition \ref{proposition_Hm_well_posedness_for_the_conjugated_problem} gives $h_0 > 0$ large such that for $h \geq h_0$ the Cauchy problem associated with $P_{\Lambda}$ is well-posed in Sobolev spaces. Namely, there exists a unique $v \in C([0,T]; H^{m})$ satisfying 
$$
\begin{cases}
	P_{\Lambda} v(t,x) = e^{\Lambda}(t,x,D) f(t,x), \quad (t, x) \in [0,T] \times \R, \\
	v(0,x) = e^{\Lambda}(0,x,D)g(x), \qquad \qquad \,\,\, x \in \R^{n},
\end{cases}
$$ 
and
\begin{equation}\label{donkey_kong_1}
	\|v(t)\|^{2}_{H^{m}} \leq C \left( \| e^{\Lambda}g \|^{2}_{H^{m}}  + \int_{0}^{t} \|e^{\Lambda}f(\tau)\|^{2}_{H^{m}} d\tau \right), \quad t \in [0,T].
\end{equation}

Setting $u = \{e^{\Lambda}\}^{-1} v$ we obtain a solution for our original problem, that is 
$$
\begin{cases}
	P u(t,x) = f(t,x), \quad (t,x) \in [0,T] \times \R, \\
	u(0,x) = g(x), \qquad \qquad  x \in \R.
\end{cases}
$$
Now let us study which space the solution $u$ belongs to. We have 
$$
u = \{e^{\Lambda}\}^{-1} v = \,^{R}\{e^{-\tilde{\Lambda}}\} \underbrace{\sum_{j} (-r)^{j}}_{\text{order zero}} e^{-k(t)\langle D \rangle^{\frac{1}{\theta}}_{h}} v,
$$
where $v \in H^{m}$. Noticing that $k(T) > 0$ and $k$ is non-increasing, we achieve 
$$
e^{-k(t)\langle D \rangle^{\frac{1}{\theta}}_{h}} v = e^{-k(T)\langle D \rangle^{\frac{1}{\theta}}_{h}} 
\underbrace{e^{(k(T)-k(t))\langle D \rangle^{\frac{1}{\theta}}_{h}}}_{\text{order zero}} v \in H^{m}_{k(T); \theta} (\R).
$$
Hence $\{e^{\Lambda}(t,x,D)\}^{-1} v \in H^{m}_{k(T) - \delta; \theta}$ for every $\delta > 0$. Moreover, from \eqref{donkey_kong_1} we obtain that $u$ satisfies the following energy estimate 
\begin{align*}
	\|u(t)\|^{2}_{H^{m}_{k(T)-\delta;\theta}} &= \| \{e^{\Lambda}(t)\}^{-1} v(t)  \|^{2}_{H^{m}_{k(T)-\delta;\theta}}  \leq C_1 \| v(t) \|^{2}_{H^{m}} \\
	&\leq C_2 \left( \| e^{\Lambda}(0) g \|^{2}_{H^{m}}  + \int_{0}^{t} \|e^{\Lambda}(\tau) f(\tau)\|^{2}_{H^{m}} d\tau \right) \\
	&\leq C_3 \left( \| g \|^{2}_{H^{m}_{\rho;\theta}}  + \int_{0}^{t} \|f(\tau)\|^{2}_{H^{m}_{\rho;\theta}} d\tau \right) , \quad t \in [0,T].
\end{align*}

Summing up, given $f \in C([0,T], H^{m}_{\rho; \theta}(\R)), g \in H^{m}_{\rho; \theta}(\R)$ for some $m \in \R$ and $\rho > 0$, we find a solution $u \in C([0,T]; H^{m}_{\rho'; \theta}(\R))$ ($\rho' < \rho$) for the Cauchy problem associated with the operator $P$ and initial data $f,g$. 

Now it only remains to prove the uniqueness of the solution. To this aim, assume take $u_1, u_2 \in C([0,T];H^{m}_{\rho';\theta}(\R))$ such that 
$$
\begin{cases}
	Pu_j = f \\
	u_j(0)= g.
\end{cases}
$$ 
For a new choice of $k(0) < \rho'$ and applying once more Proposition $\ref{proposition_Hm_well_posedness_for_the_conjugated_problem}$, we may find new parameters $M_2, M_1 > 0$ and $h_0 > 0$ such that the Cauchy problem associated to
$$
P_{\Lambda} = e^{\Lambda} \circ P \circ \{e^{\Lambda}\}^{-1}
$$ 
is well-posed in $H^{m}$, where $e^\Lambda$ represents the operator corresponding to the transformation associated with these new parameters $k(0), M_2, M_1, h_0$. Since $e^{\Lambda}f, e^{\Lambda}g \in H^{m}$ and $u_j$, $j=1,2$, satisfy 
$$
\begin{cases}
	P_{\Lambda} e^{\Lambda} u_j = e^{\Lambda} f \\
	e^{\Lambda} u_j(0) = e^{\Lambda}g,
\end{cases}
$$
we must have $e^{\Lambda} u_1 = e^{\Lambda} u_2$ and therefore $u_1 = u_2$. This concludes the proof. 
\end{proof}

\begin{remark}
	In this paper we present a result in the one space dimensional case. The extension to higher space dimension requires a more involved choice of the functions $\lambda_1, \lambda_2$ which must satisfy certain partial differential inequalities, see for instance \cite{CRJEECT, ascanelli_cappiello_schrodinger_equations_Gelfand_shillov,  KB} in the case $p=2$ and the ideas in \cite[Section 4]{ABtame} for the case $p\geq 3.$ We prefer to not treat this extension for the moment because our aim in the next future is to apply this result to semilinear equations of physical interest defined for $x\in\R^1$.
\end{remark}

\begin{remark}\label{xnelleadingterm}
In Theorem \ref{mainthm} we assume that the symbol of the leading term $a_3(t,D)$ is independent of $x$. In the $H^\infty$ setting, it is possible to consider also the more general case $a_3(t,x,D)$, assuming for $a_3$ suitable decay estimates, see \cite[Section 4]{ABtame}.  This is not possible in the Gevrey setting using our arguments, due to the conjugation with $e^{k(t) \langle D \rangle^{1/\theta}}$; indeed, if $a_3$ depends on $x$, even allowing its derivatives with respect to $x$ to decay like $\langle x \rangle^{-m}$ for $m>>0$, we obtain
$$e^{k(t) \langle D \rangle^{1/\theta}} (ia_3(t,x,D)) e^{-k(t) \langle D \rangle^{1/\theta}} = ia_3(t,x,D_x) + \textrm{op }\left(k(t) \partial_\xi \langle \xi \rangle^{\frac{1}{\theta}}\cdot \partial_x a_3 \right) + \textrm{l.o.t}$$
with $k(t) \partial_\xi \langle \xi \rangle^{\frac{1}{\theta}}\cdot \partial_x a_3 (t,x,\xi) \sim \langle \xi \rangle^{2+\frac1{\theta}} \langle x \rangle^{-m}.$ This term has order $2+\frac{1}{\theta} >2$ and cannot be controlled by other lower order terms whose order does not exceed $2$.
\end{remark}

\noindent
\textbf{Acknowledgements.} The authors are grateful to professor Giovanni Taglialatela for helpful comments and suggestions. They also wish to express their gratitude to the referee for his/her valuable criticism which helped us to improve the presentation of the results.

%%%%%%%%%%%%%%%%%%%%%%%%%%%%%%%%%%%%%%%%%%%%%%%%%%
%%%%%%%%%%%%%%%%%%%%%%%%%%%%%%%%%%%%%%%%%%%%%%%%%%
\section{Appendix}

In this last Section we explain how to obtain Theorem \ref{conjthmnew} following the steps given in the proof of Theorems 6.9, 6.10 and 6.12 in \cite{KN}. We begin stating the following result, whose proof can be found in the final example of \cite[Section 6, Chapter 1]{Kumano-Go}.

\begin{lemma}\label{lemmaexample}
	Let $a \in \mathcal{B}^{\infty}(\R^{n})$. Then for every fixed $x \in \R^{n}$, the function $(y,\eta) \mapsto e^{ix\eta} a(y)$ belongs to a class of polynomially bounded amplitudes. Moreover
	$$
	Os- \iint e^{-iy\eta} e^{ix\eta} a(y) dy\dslash\eta = Os- \iint e^{-i\eta y} a(y+x) dy\dslash\eta = a(x), \quad x \in \R^{n}. 
	$$	
\end{lemma}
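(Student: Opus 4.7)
My plan is to verify the amplitude property and then evaluate the two oscillatory integrals by the standard regularization procedure. For the amplitude property, since $a \in \mathcal{B}^{\infty}(\R^n)$ and $x$ is fixed, one has
$$|\partial^\alpha_y \partial^\beta_\eta \, (e^{ix\eta} a(y))| = |x|^{|\beta|} |\partial^\alpha_y a(y)| \leq C_{\alpha,\beta,x},$$
so the amplitude is in fact \emph{bounded} in $(y,\eta)$, hence trivially polynomially bounded, and the oscillatory integral $Os\text{-}\iint e^{-iy\eta} e^{ix\eta} a(y)\,dy\,\dslash\eta$ is well defined in the standard sense (e.g.\ as in \cite{KN}).

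For the first equality, I would introduce a regularizer $\chi \in \mathcal{S}_{\kappa}(\R^{2n})$ with $\chi(0,0)=1$ and write
$$\iint e^{-iy\eta}e^{ix\eta}a(y)\,\chi(\varepsilon y,\varepsilon\eta)\,dy\,\dslash\eta;$$
performing the change of variables $y\mapsto y+x$ (legitimate at $\varepsilon>0$ since the integrand is absolutely integrable on bounded sets after truncation, or one can choose $\chi$ of product form $\chi_1(y)\chi_2(\eta)$ and make the substitution under a compactly supported $\chi_1$) and using $-iy\eta + ix\eta = -i(y-x)\eta$, the expression becomes
$$\iint e^{-iy\eta} a(y+x)\,\chi(\varepsilon(y+x),\varepsilon\eta)\,dy\,\dslash\eta.$$
Since $\chi(\varepsilon(y+x),\varepsilon\eta)$ is another admissible family of regularizers for the fixed $x$ (it converges to $\chi(0,0)=1$ as $\varepsilon\to 0$ uniformly on compacta and satisfies the right growth bounds), passing to the limit yields the first identity.

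For the second equality, I take $\chi(y,\eta)=\phi(y)\phi(\eta)$ with $\phi\in\mathcal{S}(\R^n)$, $\phi(0)=1$, set $b(y):=a(y+x)\in\mathcal{B}^\infty(\R^n)$, and compute
$$I_\varepsilon = \iint e^{-iy\eta}\,b(y)\,\phi(\varepsilon y)\,\phi(\varepsilon\eta)\,dy\,\dslash\eta
= \int b(y)\phi(\varepsilon y)\,\varepsilon^{-n}\tilde\phi(y/\varepsilon)\,dy,$$
where $\tilde\phi(z)=\int e^{-iz\eta}\phi(\eta)\,\dslash\eta$. Substituting $y\mapsto\varepsilon y$ gives
$$I_\varepsilon = \int b(\varepsilon y)\,\phi(\varepsilon^2 y)\,\tilde\phi(y)\,dy.$$
By Fourier inversion, $\int\tilde\phi(y)\,dy=\phi(0)=1$; by dominated convergence (using $\|b\|_\infty,\|\phi\|_\infty<\infty$ and $\tilde\phi\in L^1$), $I_\varepsilon \to b(0)=a(x)$, proving the identity. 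No serious obstacle is expected; the only mild care is to check that the rearrangement of integrals and the change of variables are legitimate under the regularizer, which is the usual routine of the oscillatory integral formalism.
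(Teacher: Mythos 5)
Your proposal is correct and follows essentially the same route as the paper: regularize, integrate out $\eta$ to produce the rescaled kernel $\varepsilon^{-n}\tilde\phi(y/\varepsilon)$, substitute $y\mapsto\varepsilon y$, and conclude by dominated convergence together with $\int\tilde\phi\,dy=\phi(0)=1$. The only difference is that you also spell out the justification of the first equality (the translation $y\mapsto y+x$ under the regularizer and the admissibility of the shifted cutoff), a step the paper takes for granted.
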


\begin{comment}
\begin{proof}
	From the definition of oscillatory integral we have
	$$
	Os- \iint e^{-i\eta y} a(y+x) dy\dslash\eta = \lim_{\varepsilon \to 0} \iint e^{-i\eta y} a(y+x) \chi(\varepsilon y) \chi(\varepsilon\eta) dy\dslash\eta, 
	$$
	where $\chi \in \mathscr{S}(\R^n)$ such that $\chi(0) = 1$. Now it suffices to note that 
	\begin{align*}
		\lim_{\varepsilon \to 0} \iint e^{-i\eta y} a(y+x) \chi(\varepsilon y) \chi(\varepsilon\eta) dy\dslash\eta &= \lim_{\varepsilon \to 0}
		\int  a(y+x) \chi(\varepsilon y) \varepsilon^{-n} \mathcal{F}[\chi](\varepsilon^{-1}y) \dslash y \\
		&= \lim_{\varepsilon \to 0}
		\int  a(\varepsilon y+x) \chi(\varepsilon^{2} y) \mathcal{F}[\chi](y) \dslash y \\
		&= a(x),
	\end{align*}
	where in the last step we have used the Lebesgue dominated convergence theorem. 
\end{proof}
\end{comment}

Theorem \ref{conjthmnew} is a direct consequence of the two following propositions and of the final Remark \ref{ultimoremark}.

\begin{proposition}\label{proposition_e_Lambda_circ_p}
	Under the assumptions of Theorem \ref{conjthmnew},
	there exists $\tilde{\delta} > 0$ such that if $\rho_0 \leq \tilde{\delta} A^{-\frac{1}{\kappa}}$, then 
	$$
	e^\lambda(x,D) \circ p(x,D) = \textrm{op}\left(e^\lambda(x,\xi) s_N(x,\xi)\right) + q_N(x,D)+ r_{\infty}(x,D),  
	$$
	where 
	\begin{equation}\label{sN}
		s_N(x,\xi) = \sum_{|\alpha|<N } \frac{1}{\alpha!} e^{-\lambda(x,\xi)} \{\partial^{\alpha}_{\xi}e^{\lambda(x,\xi)}\} D^{\alpha}_{x}p(x,\xi),
	\end{equation}
	\begin{equation}\label{eq_remainder1}
		|\partial^{\alpha}_{\xi}\partial^{\beta}_{x}q_N(x,\xi)| \leq C_{\rho_0,A,\kappa} (C_{\kappa}A)^{|\alpha+\beta|+2N}\alpha!^{\kappa}\beta!^{\kappa}N!^{2\kappa-1} \langle \xi \rangle^{m-(1-\frac{1}{\kappa})N - |\alpha|},
	\end{equation}
	\begin{equation}\label{eq_remainder2}
		|\partial^{\alpha}_{\xi}\partial^{\beta}_{x}r_{\infty}(x,\xi)| \leq C_{\rho_0,A,\kappa} (C_{\kappa}A)^{|\alpha+\beta|+2N}\alpha!^{\kappa}\beta!^{\kappa}N!^{2\kappa-1} e^{-c_\kappa A^{-\frac{1}{\kappa}} \langle \xi \rangle^{\frac{1}{\kappa}}}.
	\end{equation}                    
\end{proposition}
\begin{proof}
	Arguing as in the proof of \cite[Theorem 6.9]{KN}, we can write the symbol $s(x,\xi)$ of the composition $e^\lambda(x,D) \circ p(x,D)$ as
	\begin{align*}
		s(x,\xi) = Os- \iint e^{-i\eta y} e^{\lambda(x, \xi + \eta)} p(x+y,\xi) dy\dslash\eta.
	\end{align*}
	Applying Taylor's formula to $e^{\lambda(x,\xi+\eta)}$ and then applying Lemma \ref{lemmaexample} we obtain 
	\begin{align*}
		s(x,\xi) =  \sum_{|\alpha| < N} \frac{1}{\alpha!} \partial^{\alpha}_{\xi}e^{\lambda(x,\xi)} D^{\alpha}_{x}p(x,\xi)
		+ r_{N}(x,\xi)= e^{\lambda(x,\xi)} s_N(x,\xi)+r_N(x,\xi),
	\end{align*}	
	where 
	\begin{align*}
		r_N(x,\xi) = \sum_{|\alpha| = N} \frac{1}{\alpha!} \,Os-\iint e^{-iy\eta}D^{\alpha}_{x} p(x+y,\xi) \int_{0}^{1} (1-\theta)^{N-1} \partial^{\alpha}_{\xi} e^{\lambda(x,\xi+\theta\eta)} d\theta  dy\dslash\eta.
	\end{align*}
	Therefore 
	\begin{align*}
		e^\lambda(x,D) \circ p(x,D) =  \textrm{op}\left(e^\lambda(x,\xi)  s_N(x,\xi)\right) +  r_N(x,D),
	\end{align*}
where $s_N$ is given by \eqref{sN}. 
	Take now $\chi(t) \in C^{\infty}_{c}(\R)$ such that 
	\beqs\label{chi}
	|\partial^{j}_{t}\chi(t)| \leq C^{j+1} j!^{\kappa'}\,\, (1< \kappa' < \kappa), \quad \chi(t) =
	\begin{cases}
		1, \quad |t| \leq \frac{1}{4} \\
		0, \quad |t| \geq \frac{1}{2}
	\end{cases},
	\eeqs
and	set $\chi(\xi, \eta) = \chi(\langle \eta \rangle \langle \xi \rangle^{-1})$, $\xi, \eta \in \R^{n}$. Note that 
	$$
	\frac{1}{2} \langle \xi \rangle \leq \langle \xi + \theta\eta \rangle \leq \frac{3}{2} \langle \xi \rangle
	$$
	for every $\xi, \eta \in \, \text{supp}\, \chi(\xi, \eta)$ and $|\theta| \leq 1$. We can split the operator $r_N(x,D)$ as 
	\begin{align*}
 r_N(x,D)u(x) &= \sum_{|\alpha| = N} \frac{1}{\alpha!} \int e^{i\xi x+\lambda(x,\xi)}\, Os- \iint e^{-i\eta y} \int_{0}^{1} (1-\theta)^{N-1} e^{\lambda(x,\xi+\theta\eta)-\lambda(x,\xi)} \\
		&\times w^{\alpha}(\lambda; x, \xi+\theta\eta) d\theta\, \{D^{\alpha}_{x}p\}(x+y,\xi) dy\dslash\eta \, \widehat{u}(\xi) \dslash\xi	 \\
%		&= \sum_{|\alpha| = N} \frac{1}{\alpha!} \int e^{i\xi x+\lambda(x,\xi)}\, Os- \iint e^{-i\eta y} \int_{0}^{1} (1-\theta)^{N-1} e^{\lambda(x,\xi+\theta\eta)-\lambda(x,\xi)} \\
%		&\times w^{\alpha}(\lambda; x, \xi+\theta\eta) d\theta\, \{D^{\alpha}_{x}p\}(x+y,\xi) \chi(\xi, \eta) dy\dslash\eta \, \widehat{u}(\xi) \dslash\xi \\
%		&+ \sum_{|\alpha| = N} \frac{1}{\alpha!} \int e^{ix\xi}\, Os- \iint e^{-i\eta y} \int_{0}^{1} (1-\theta)^{N-1} \partial^{\alpha}_{\xi}e^{\lambda(x,\xi+\theta\eta)}d\theta \\
%		&\times \{D^{\alpha}_{x}p\}(x+y,\xi)(1-\chi)(\xi,\eta) dy\dslash\eta \, \widehat{u}(\xi) \dslash\xi	 \\
		&= \int e^{i\xi x+\lambda(x,\xi)} r'_N(x,\xi) \widehat{u}(\xi) \dslash\xi + \int e^{ix\xi} r^{''}_N(x,\xi) \widehat{u}(\xi) \dslash\xi,
	\end{align*}
	where, for $\alpha,\beta\in \N^n$ we denote $w^\alpha(\lambda;x,\xi):=e^{-\lambda(x,\xi)}\partial_\xi^\alpha  e^\lambda(x,\xi)$ and
	\begin{align*}
		r'_N(x,\xi) &= \lim_{\varepsilon  \to 0} \sum_{|\alpha| = N} \frac{1}{\alpha!}\iint e^{-i\eta y} \int_{0}^{1} (1-\theta)^{N-1} e^{\lambda(x,\xi+\theta\eta)-\lambda(x,\xi)}w^{\alpha}(\lambda; x, \xi+\theta\eta) d\theta \\
		&\times  \{D^{\alpha}_{x}p\}(x+y,\xi) \chi(\xi, \eta) \chi_{\varepsilon}(y,\eta) dy\dslash\eta,
	\end{align*}
	\begin{align*}
		r^{''}_N(x,\xi) &= \lim_{\varepsilon  \to 0} \sum_{|\alpha| = N} \frac{1}{\alpha!}\iint e^{-i\eta y} \int_{0}^{1} (1-\theta)^{N-1} 
		\partial^{\alpha}_{\xi} e^{\lambda(x,\xi+\theta\eta)} d\theta \\
		&\times  \{D^{\alpha}_{x}p\}(x+y,\xi) (1-\chi)(\xi, \eta) \chi_{\varepsilon}(y,\eta) dy\dslash\eta,
	\end{align*}
	and $\chi_{\varepsilon}(y,\eta) = \chi(\varepsilon y)\chi(\varepsilon\eta)$, $\chi \in \Sigma_{\kappa'}(\R^{n})$, $\chi(0) = 1$.
	It is not difficult to verify that the term $D^{\alpha}_{x}p(x+y,\xi)e^{-\lambda(x, \xi+\theta\eta)}\partial^{\alpha}_{\xi}e^{\lambda(x,\xi+\theta\eta)}$ has order $m-N(1-\frac{1}{\kappa})$ with respect to $\xi$. Applying the same arguments used in \cite[Theorem 6.9]{KN} to estimate the remainder terms of the composition, one can split the symbol $r'_N$ into the sum of two terms satisfying \eqref{eq_remainder1} and \eqref{eq_remainder2} respectively. The estimate \eqref{eq_remainder2} for $r^{''}_N$ can be obtained simply integrating by parts and using the fact that $(1-\chi)(\xi,\eta)$ is supported for $\langle \eta \rangle \geq 4^{-1} \langle \xi \rangle$. We leave the details to the reader. 
\end{proof}

\begin{proposition}\label{proposition_p_phi_circ_reverse} Under the assumptions of Theorem \ref{conjthmnew},
	there exist $\tilde{\delta} > 0$ and $h_0(A) \geq 1$ such that if $h \geq h_0$ and $\rho_0 \leq \tilde{\delta} A^{-\frac{1}{\kappa}}$ we may write the product $\textrm{op}(e^\lambda p)\circ\, ^{R}\{e^{-\lambda}\}$ as follows
	$$
	\textrm{op}(e^\lambda p)\circ\, ^{R}\{e^{-\lambda}(x,D)\} = s_{N'}(x,D) + q_{N'}(x,D) + r_{\infty}(x,D),  
	$$
	where 
	\begin{equation}
		s_{N'}(x,\xi) = \sum_{|\alpha| < N'} \frac{1}{\alpha!} \partial^{\alpha}_{\xi} \{ e^{\lambda(x,\xi)}p(x,\xi) D^{\alpha}_{x}e^{-\lambda(x,\xi)} \},
	\end{equation}
	\begin{equation}\label{equation_estimate_q_N_composition_with_reverse}
		|\partial^{\alpha}_{\xi}\partial^{\beta}_{x}q_{N'}(x,\xi)| \leq C_{\rho_0,A,\kappa} (C_{\kappa}A)^{|\alpha+\beta|+2N'}\alpha!^{\kappa}\beta!^{\kappa}N'!^{2\kappa-1} \langle \xi \rangle^{m-(1-\frac{1}{\kappa})N' - |\alpha|}_{h},
	\end{equation}
	\begin{equation}\label{equation_estimate_regularizing_term_with_reverse}
		|\partial^{\alpha}_{\xi}\partial^{\beta}_{x}r_{\infty}(x,\xi)| \leq C_{\rho_0,A,\kappa} (C_{\kappa}A)^{|\alpha+\beta|+2N'}\alpha!^{\kappa}\beta!^{\kappa}N'!^{2\kappa-1} e^{-c_\kappa A^{-\frac{1}{\kappa}} \langle \xi \rangle^{\frac{1}{\kappa}}_{h}}.
	\end{equation}
\end{proposition}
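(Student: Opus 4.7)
The plan is to adapt the strategy of Proposition \ref{proposition_e_Lambda_circ_p} to the right-hand composition with $\{e^{-\Lambda}\}^\ast$. First I would reduce the composition to a left symbol: writing
\[\textrm{op}(e^\Lambda p)\circ\{e^{-\Lambda}\}^{\ast} u(x)=\iint e^{i(x-z)\xi}e^{\Lambda(x,\xi)}p(x,\xi)e^{-\Lambda(z,\xi)}u(z)\,dz\dslash\xi\]
(using that $\Lambda$ is real valued) and setting $z=x+y$ yields the left symbol
\[s(x,\xi)=Os-\iint e^{-iy\eta}\,e^{\Lambda(x,\xi+\eta)}p(x,\xi+\eta)e^{-\Lambda(x+y,\xi+\eta)}\,dy\dslash\eta.\]

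Next I would Taylor-expand $e^{-\Lambda(x+y,\xi+\eta)}$ in $y$ about $y=0$ to order $N'$. Applying $y^\alpha e^{-iy\eta}=(-D_\eta)^\alpha e^{-iy\eta}$, integrating by parts in $\eta$, and using that $Os-\iint e^{-iy\eta}g(\eta)\,dy\dslash\eta=g(0)$, the polynomial part collapses (after the cancellation $i^{|\alpha|}(-i)^{|\alpha|}=1$ between $\partial_x^\alpha=i^{|\alpha|}D_x^\alpha$ and $D_\xi^\alpha=(-i)^{|\alpha|}\partial_\xi^\alpha$) exactly to $s_{N'}(x,\xi)$. The integral remainder is
\[\tilde r_{N'}=\sum_{|\alpha|=N'}\frac{N'}{\alpha!}\int_0^1(1-\theta)^{N'-1}Os-\iint e^{-iy\eta}y^\alpha e^{\Lambda(x,\xi+\eta)}p(x,\xi+\eta)\partial_x^\alpha e^{-\Lambda(x+\theta y,\xi+\eta)}\,dy\dslash\eta\,d\theta,\]
and I split $\tilde r_{N'}=r'_{N'}+r''_{N'}$ via the Gevrey cutoff $\chi(\xi,\eta)=\chi(\langle\eta\rangle\langle\xi\rangle_h^{-1})$ of \eqref{chi}. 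For the far-frequency piece $r''_{N'}$ (where $\langle\eta\rangle\gtrsim\langle\xi\rangle_h$), iterated integrations by parts with $\langle y\rangle^{-2\ell_2}\langle D_\eta\rangle^{2\ell_2}$ and $\langle\eta\rangle^{-2\ell_1}\langle D_y\rangle^{2\ell_1}$ produce bounds of the form $(C_\kappa A)^{2\ell_1}(\ell_1!)^{2\kappa}\langle\eta\rangle^{-2\ell_1}e^{2\rho_0(\langle\xi\rangle^{1/\kappa}+\langle\eta\rangle^{1/\kappa})}$; optimizing $\ell_1$ (the Gevrey integration-by-parts principle) extracts a factor $e^{-c_\kappa A^{-1/\kappa}\langle\eta\rangle^{1/\kappa}}$, and combined with $\rho_0\le\tilde\delta A^{-1/\kappa}$ ($\tilde\delta$ small) together with $\langle\eta\rangle\gtrsim\langle\xi\rangle_h$ on $\supp(1-\chi)$ this yields the exponential bound \eqref{equation_estimate_regularizing_term_with_reverse} for this contribution to $r_\infty$.

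For the near-frequency piece $r'_{N'}$ the decisive observation is the identity
\[e^{\Lambda(x,\xi+\eta)}\,\partial_x^\alpha e^{-\Lambda(x+\theta y,\xi+\eta)}=e^{-\theta y\cdot \tilde L(x,y,\xi,\eta,\theta)}\,Q_\alpha(x+\theta y,\xi+\eta),\]
where $\tilde L=\int_0^1\partial_x\Lambda(x+s\theta y,\xi+\eta)\,ds$ is of order $0$ and bounded by $\rho_0 A$ thanks to \eqref{equation_stronger_hypothesis_on_Lambda}, and $Q_\alpha=e^\Lambda\partial_x^\alpha e^{-\Lambda}$ is the Fa\`a-di-Bruno polynomial in derivatives of $\Lambda$, also of order $0$. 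The exponential rearranges as $e^{-iy\eta}\,e^{-\theta y\tilde L}=e^{-iy(\eta-i\theta\tilde L)}$, prompting a contour deformation $\eta\mapsto\eta-i\theta\tilde L$ carried out via Stokes' theorem on the almost analytic extensions in $\xi$ of $p(x,\cdot)$ and $Q_\alpha(x+\theta y,\cdot)$ given by \eqref{eq_almost_analytic_extension_def}. This decomposes $r'_{N'}=r'_{1,N'}+r'_{2,N'}$, with $r'_{1,N'}$ the bulk along the shifted contour and $r'_{2,N'}$ the $\bar\partial$-error. Further $y,\eta$ integrations by parts in $r'_{1,N'}$ move the $N'$ spare derivatives in $\xi$ onto $e^{-\Lambda}$ (each producing a factor $\partial_\xi\Lambda\in S^{1/\kappa-1}_\kappa$) and onto $p$, giving the polynomial decay $\langle\xi\rangle_h^{m-N'(1-1/\kappa)}$ of \eqref{equation_estimate_q_N_composition_with_reverse} for $q_{N'}$, while $r'_{2,N'}$ inherits the exponential decay \eqref{eq_estimate_derivatives_del_bar_aae} of the almost analytic extension and merges into $r_\infty$.

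The main obstacle is the contour deformation in the frequency variable. In Proposition \ref{proposition_e_Lambda_circ_p} the shift was in $y$ by $\Lambda_\xi\in S^{1/\kappa-1}_\kappa$, automatically admissible for the almost analytic extension of $p$ in $x$ (which tolerates $|y|\le B_1\langle\xi\rangle^{1/\kappa-1}$). Here, by contrast, the shift $\theta\tilde L$ in $\eta$ has only order $0$, so admissibility against the $\xi$-extension (which allows $|\Im\eta|\le B_1\langle\xi+\Re\eta\rangle_h^{1/\kappa}$) forces $\rho_0 A\le B_1\langle\xi\rangle_h^{1/\kappa}$; because $\langle\xi\rangle_h\ge h$, this is secured precisely by the hypothesis $h\ge h_0(A)$ of the statement. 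A secondary delicacy is that $\tilde L$ itself depends on $\eta$, so the deformation has to be parametrized through the intermediate region $\Gamma(x,y,\xi,\theta)=\{\eta-it\theta\tilde L:\eta\in\R^n,\,t\in[0,1]\}$ before applying Stokes' theorem, mirroring the treatment of $r'_{2,N}$ in the proof of Proposition \ref{proposition_e_Lambda_circ_p}.
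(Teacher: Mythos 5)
Your proposal follows the same overall playbook as the paper's proof: reduce to the left symbol via an oscillatory integral, Taylor-expand, factor the exponential, deform the frequency contour using a Stokes argument on the region $\Gamma$, split into near/far frequency pieces via the cutoff $\chi(\xi,\eta)$, and estimate. The one substantive organizational difference lies in the Taylor expansion. The paper expands in the frequency variable (equivalently, applies $\partial_\xi^\alpha D_y^\alpha$ and uses Lemma \ref{lemmaexample}), so that the remainder is built directly from $\partial_\xi^\alpha\{e^{\Lambda}p\,D_x^\alpha e^{-\Lambda}\}$ evaluated at $\xi+\theta\eta$; the Leibniz expansion of $\partial_\xi^\alpha$ then yields the factor $f_\alpha$, which already carries the decay $\langle\xi\rangle^{m-N'(1-1/\kappa)}$ (and the factorial $N'!^{2\kappa-1}$) before any contour work is done. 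You instead expand $e^{-\Lambda(x+y,\cdot)}$ in $y$, which leaves a polynomial factor $y^\alpha$ in the remainder with no a priori $\xi$-gain; the gain must then be recovered afterwards by a further round of $\eta$-integration-by-parts post-deformation, converting $y^\alpha$ into $\partial_\eta^\alpha$. This is workable but adds a step, and it also makes your explanation of the gain slightly inaccurate: after the $Q_\alpha$-factorization there is no $e^{-\Lambda}$ left for $\partial_\xi$ to hit, so the gain actually comes from $\partial_\xi$ acting on $Q_\alpha$ (order $\le -1$ per derivative by \eqref{equation_stronger_hypothesis_on_Lambda}), on $p$, and on the Jacobian of the deformation. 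You correctly flag the two key points: the shift $\theta\tilde L$ is of order $0$ (unlike the order $\frac{1}{\kappa}-1$ shift in Proposition \ref{proposition_e_Lambda_circ_p}), so admissibility of the contour requires $h\ge h_0(A)$; and the implicit $\eta$-dependence of $\tilde L$ means the deformation must be parametrized through $\Gamma$. On the latter point you should be more explicit: to rewrite the integral over the deformed contour $\gamma$ as an oscillatory integral over the real contour one must invert the map $\eta\mapsto\eta-i\theta\tilde L(\eta)$, which is exactly the role of $\varXi$ from \cite[Lemma 5.4]{KN}; without it the Stokes decomposition cannot be written in the form needed to read off the estimates \eqref{equation_estimate_q_N_composition_with_reverse} and \eqref{equation_estimate_regularizing_term_with_reverse}.
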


\begin{proof}
	We have 
	\begin{align*}
		^{R}\{e^{-\lambda}(x,D)\} u(x) &= \iint e^{i\xi (x-y)} e^{-\lambda(y,\xi)} u(y) dy \dslash\xi \\
		&= \int e^{i\xi x} \int e^{-i\xi y} e^{-\lambda(y,\xi)} u(y) dy \,\dslash\xi,
	\end{align*}
	which implies
	$$		
		\textrm{op}(e^\lambda p)\circ\, ^{R}\{e^{-\lambda}(x,D)\} u(x) 
= \iint e^{i\xi (x-y)} e^{\lambda(x,\xi) - \lambda(y,\xi)} p(x,\xi) u(y) dy \dslash\xi.
	$$
	In this way the symbol $\sigma(x,\xi)$ of the composition $\textrm{op}(e^\lambda p)\circ\, ^{R}\{e^{-\lambda}\}$ is given by
	\begin{align*}
		\sigma(x,\xi) = Os- \iint e^{-i\eta y} 	e^{\lambda(x,\xi+\eta) - \lambda(x+y,\xi+\eta)} p(x,\xi+\eta) dy\dslash\eta.
	\end{align*}
	By Taylor's formula and Lemma \ref{lemmaexample} we obtain 
	\begin{align*}
		\sigma(x,\xi) &= \sum_{|\alpha'| < N'} \frac{1}{\alpha!} \partial^{\alpha}_{\xi} \{ e^{\lambda(x,\xi)}p(x,\xi) D^{\alpha}_{x}e^{-\lambda(x,\xi)} \} \\
		&+ N'\sum_{|\alpha| = N'} \frac{1}{\alpha!} \int_{0}^{1} (1-\theta)^{N'-1} \, Os-\iint e^{-iy\eta} \partial^{\alpha}_{\xi} \{
		e^{\lambda(x,\xi+\theta\eta)}p(x,\xi+\theta\eta) D^{\alpha}_{x}e^{-\lambda(x+y,\xi+\theta\eta)} \}  dy\dslash\eta\,d\theta \\
	&= s_{N'}(x,\xi)+ r_{N'}(x,\xi).
	\end{align*}
	Now we observe that thanks to \eqref{equation_stronger_hypothesis_on_Lambda} we have that 
	$$
	e^{-\lambda(x,\xi+\theta\eta)}e^{\lambda(x+y,\xi+\theta\eta)}
	\partial^{\alpha}_{\xi} \{e^{\lambda(x,\xi+\theta\eta)}p(x,\xi+\theta\eta) D^{\alpha}_{x}e^{-\lambda(x+y,\xi+\theta\eta)} \}
	$$
	has order $m-N'(1-\frac{1}{\kappa})$ (with respect to $\xi$). Applying the same argument used
	 %to obtain estimates for $p_\varepsilon$, where $p_\varepsilon$ is defined by $(6.34)$ at page $38$
	   in the proof of \cite[Theorem 6.10]{KN}, one can split the symbol $r_{N'} = q_{N'} + r_{\infty}$, where $q_{N'}$ and $r_{\infty}$ satisfy \eqref{equation_estimate_q_N_composition_with_reverse} and \eqref{equation_estimate_regularizing_term_with_reverse} respectively. Details are left to the reader.

\end{proof}

\begin{remark}\label{ultimoremark}
	Shrinking $\tilde{\delta} > 0$ if necessary, we may conclude that 
	$$
	r_{\infty}(x,D) \circ\, ^{R}\{e^{-\lambda}(x,D)\} = \tilde{r}_{\infty}(x,D),
	$$
	where $\tilde{r}_\infty(x,D)$ is still a regularizing operator.
\end{remark}

%\begin{remark}
%	If we assume that $p(x,\xi)$ is a projective symbol, that is, the estimate 
%	$$
%	|\partial_\xi^\alpha \partial_x^\beta p(x,\xi)| \leq C_A A^{|\alpha+\beta|} \alpha!^{\kappa}\beta!^{\kappa} \langle \xi \rangle^{m-|\alpha|}_{h},
%	$$
%	holds true for every $A > 0$, then the previous results should work without hypothesis over $\rho_0$. This means that $k(0)$ can be chosen large enough (depending on the initial data of course), in order to make our inversion more regularizing to conclude that the solution belongs to $H^{\infty}_{\rho;\theta}$ for every $\rho > 0$.  
%\end{remark}

%%%%%%%%%%%%%%%%%%%%%%%%%%%%%%%%%%%%%%%%%%%%%
%%%%%%%%%%%%%%%%%%%%%%%%%%%%%%%%%%%%%%%%%%%%%

%%%%%%%%%%%%%%%%%%%%%%%%%%%%%%%%%%%%%%%%%%%%%%%%%
%%%%%%%%%%%%%%%%%%%%%%%%%%%%%%%%%%%%%%%%%%%%%%%%%

%%%%%%%%%%%%%%%%%%%%%%%%%%%%%%%%%%%%%%%%%%%%%%%%%%%
%%%%%%%%%%%%%%%%%%%%%%%%%%%%%%%%%%%%%%%%%%%%%%%%%%%

\end{document}